\title{Equivariant K-homology of Bianchi groups with non-trivial class group}
\author{Mathias Fuchs}
\address{Department of Bioinformatics, Center of Informatics, Statistics and Epidemiology, University Medical Center G\"ottingen}
\email{mfu@bioinf.med.uni-goettingen.de}
\urladdr{http://www.bioinf.med.uni-goettingen.de/people/mathias\_fuchs/}
\subjclass[2000]{Primary: 19L47 Equivariant $K$-theory, 55N91 Equivariant homology and cohomology; Secondary: 22E40 Discrete subgroups of Lie groups, 57S30 Discontinuous groups of transformations.}
\renewcommand{\MR}[1]{{MR#1}}
\newcommand{\Hy}{\mathcal{H}}
\newcommand{\C}{{\mathbb{C}}}
\newcommand{\R}{{\mathbb{R}}}
\newcommand{\Z}{{\mathbb{Z}}}
\newcommand{\N}{{\mathbb{N}}}
\newcommand{\Q}{{\mathbb{Q}}}
\newcommand{\rationals}{{\mathbb{Q}}}
\newcommand*{\Homol}{\operatorname{H}}
\newcommand*{\SL}{\operatorname{SL}}
\newcommand*{\PSL}{\operatorname{PSL}}
\newcommand*{\PSO}{\operatorname{PSO}}
\newcommand*{\PSU}{\operatorname{PSU}}
\newcommand*{\SO}{\operatorname{SO}}
\newcommand*{\SU}{\operatorname{SU}}
\newcommand*{\KK}{\operatorname{KK}}
\newcommand*{\K}{\operatorname{K}}
\newcommand*{\lcross}{\ltimes}
\newcommand*{\LHSnaught}{R\K^\Gamma_0(\underline{E}\Gamma)}
\newcommand*{\LHSone}{R\K^\Gamma_1(\underline{E}\Gamma)}
\newcommand*{\LHSstar}{R\K^\Gamma_*(\underline{E}\Gamma)}
\newcommand*{\Cred}{C^\ast_{\textit{red}}}
\newcommand*{\Cmax}{C^\ast_{\textit{max}}}
\renewcommand{\le}{\leqslant}
\renewcommand{\ge}{\geqslant}
\newcommand{\bs}{\backslash}
\newcommand{\Fin}{{\mathfrak F \mathfrak i \mathfrak n }}
\newcommand{\abs}[1]{\left|#1\right|}
\theoremstyle{plain}
\newtheorem{thm}{\bfseries Theorem}
\newtheorem{Lem}[thm]{\bfseries Lemma}
\newtheorem{PropDef}[thm]{\bfseries Proposition and Definition}
\newtheorem{cor}[thm]{\bfseries Corollary}
\newtheorem{df}[thm]{\bfseries Definition}
\newtheorem{fact}[thm]{\bfseries Fact}
\theoremstyle{remark}
\newtheorem{ex}[thm]{\bfseries Example}
\newtheorem{obs}[thm]{\bfseries Observation}
\newtheorem{rem}[thm]{\bfseries Remark}
\begin{document}
\begin{abstract}
We compute the equivariant K-homology of the groups PSL$_2$ of imaginary quadratic integers with trivial and non-trivial class-group. This was done before only for cases of trivial class number.\\
We rely on reduction theory in the form of the $\Gamma$-CW-complex defined by Fl\"oge. We show that the difficulty arising from the non-proper action of $\Gamma$ on this complex can be overcome by considering a natural short exact sequence of $C^\ast$-algebras associated to the universal cover of the Borel-Serre compactification of the locally symmetric space associated to $\Gamma$. We use rather elementary $C^\ast$-algebraic techniques including a slightly modified Atiyah-Hirzebruch spectral sequence as well as several 6-term sequences.\\
This computes the K-theory of the reduced and full group $C^\ast$-algebras of the Bianchi groups.
\end{abstract}

\maketitle 

\setcounter{secnumdepth}{4}
\setcounter{tocdepth}{3}

\section{Introduction}
This paper is concerned with the Bianchi groups, i.e. the class of groups defined by PSL$_2$ of the integers of an imaginary quadratic number field $\rationals [\sqrt{-m}]$ where $m$ is a product of different primes. Any such group is a group of orientation-preserving isometries of hyperbolic three space $\Hy$ by means of the right-coset identification $\Hy \cong \PSL(2, \C) / \PSU(2)$. We aim at computing the left hand side of the Baum-Connes-conjecture for these groups. Our main result is theorem \ref{statement} that accomplishes the calculation in the example case $m=5$.\\
There is vast literature on the Bianchi groups \cites{Fine1989,Elstrodt1998a,Maclachlan2003} and the conjecture \cite{Mislin2003}. The latter is concerned with the difficult problem to parametrize K-theory classes of projective modules over the reduced $C^\ast$-algebra of $\Gamma$. This $C^\ast$-algebra is defined as the closure in the operator norm of the left regular representation of $\Gamma$ on $\ell^2\Gamma$. The conjecture asserts that the K-theory of $\Cred\Gamma$ is isomorphic to equivariant K-homology with $\Gamma$-compact supports. We shall use the ``official'' definition
$$
R\KK_*(\mathscr{C}_0(\underline{E}\Gamma), \C) := \lim_{\substack{X\subset \underline{E}\Gamma:\\ \Gamma\backslash X \text{compact}}} \KK^\Gamma_*(\mathscr{C}_0(X), \C)
$$
of equivariant K-theory with $\Gamma$-compact supports as in \cite{Baum1994}, where $\underline{E}\Gamma$ is a classifying space for proper actions of $\Gamma$, and  $\KK^\Gamma$ is equivariant bivariant K-theory as defined  in \cite{Kasparov1988}. In the present context of a closed subgroup $\Gamma$ of a semisimple Lie group $G$, the symmetric space $G/K = \Hy$ is a typical universal space $\underline{E}\Gamma$ \cite{Baum1994}, where $K$ is a maximal compact subgroup of $G$.\\
The Baum-Connes conjecture applies equally to Lie groups and has, in fact, been proved for all connected Lie groups \cite{Chabert2003}. However, most often the Plan\-che\-rel formula allows to understand the structure of the left regular representation of Lie groups, whereas the $C^\ast$-algebras of arithmetic groups have turned out to be very hard to understand. One of the reasons is that for many arithmetic groups, the $C^\ast$-algebra is not type I, i.e. there are irreducible representations not contained in the operator ideal of compact operators.\\
Hence, the knowledge of the K-theory of the $C^\ast$-algebra yields valuable information which seems not to be accessible by more elementary or more explicit approaches. Furthermore, the Baum-Connes conjecture is, so far, the method of choice for computing the K-theory whenever the structure of the $C^\ast$-algebra is unknown.\\
However, it often turns out that the left-hand side is not immediately calculable either. The present paper is in spirit quite close to \cite{Sanchez2008} which computes the left-hand-side for the group $\Gamma = \SL(3,\Z)$; however, it is unknown if the assembly map is surjective in this case. In contrast, Julg and Kasparov have verified the Baum-Connes conjecture for all discrete subgroups of $\SO(n, 1)$ and $\SU(n, 1)$ \cite{Julg 1995}. Since $\PSL(2, \C) \cong \PSO(3, 1)$, this readily implies that the assembly map is an isomorphism for all Bianchi groups. Therefore, the Bianchi groups are arguably the most interesting arithmetic groups for which complete knowledge of (the isomorphism type of) $\mbox{K}_*(\Cred\Gamma)$ is available.\\
The canonical way to understand arithmetic groups is via actions on retracts of the symmetric space, as computed by reduction theory. Here, we shall use (a suitable subset of) Fl\"oge's CW-complex \cite{Floge1983}, a union of a 2-dimensional retract of hyperbolic 3-space with a countable subset of the Satake (spherical) boundary of $\Hy$. These boundary points of the complex are the $\Gamma$-orbits of so-called singular vertices of the Fl\"oge complex, and such orbits are in bijection with the non-trivial elements of the class group \cite{Serre1970}. The singular points are clearly visible in the visualizations of several exemplary fundamental domains in \cite{Floge1983} and \cite{Rahm2011a}. Contractibility of the Fl\"oge complex has been shown by Rahm and the author in \cite{Rahm2011a}.\\
The equivariant $K$-homology of Bianchi groups has been calculated by Rahm in \cite{Rahm2011}. However, he considers only the cases of trivial class number $m = 1,\allowbreak 2,\allowbreak 3,\allowbreak 7,\allowbreak 11,\allowbreak 19,\allowbreak 43,\allowbreak 67,\allowbreak 163$. The reason for this restriction is that in these cases there are no singular points. However, for higher class numbers the Fl\"oge complex is not proper anymore, as the stabilizers of the singular points are parabolic. An algorithmic approach for computation of the complex has been introduced, described and exploited for the computation of the integral homology by A.~Rahm and the author in \cite{Rahm2011a}. Furthermore, a program for the calculation of fundamental domains in the Pari/GP language has been published as part of Rahm's PhD thesis \cite{Rahm2010a}.\\
Since the presence of singular points in the Fl\"oge complex leads to the action being non-proper, in these cases the Fl\"oge complex is not a model for the universal classifying space $\underline{E}\Gamma$ of proper actions, and therefore not immediately suitable for calculation of equivariant $K$-homology for higher class numbers. Moreover, the Fl\"oge complex is not locally finite since there are infinitely many edges emanating from the singular points. So, there is no obvious locally compact topology on it, leading to severe difficulties in defining an associated commutative $C^\ast$-algebra.\\
There is a general construction for turning arbitrary, possibly non-proper, $\Gamma$-CW-complexes into proper actions \cite{Mislin2001} which could in principle be applied to the Fl\"oge complex; however, the construction is not in general cofinite and therefore difficult to use for computational purposes.\\
In the present paper, we show how to overcome these difficulties. The approach pursued in this paper is more akin of classical $C^\ast$-algebraic techniques, and hence closer to the original object under study, the $C^\ast$-algebra $\Cred\Gamma$. The four main ingredients are: reduction theory in the form of the Fl\"oge complex, the existence of the Borel-Serre compactification, a certain amount of Kasparov theory, and topology in the form of an Atiyah-Hirzebruch spectral sequence.\\
Let us give a short summary of the steps and results. Section \ref{Fl} introduces the topological objects under study. We identify the universal cover of the Borel-Serre compactification of the locally symmetric space associated to $\Gamma$ as a universal $\Gamma$-space (lemma \ref{isUP}). One may note that for torsion-free subgroups of Bianchi groups, Borel-Serre compactifications of the quotient $\Gamma\backslash \Hy$ (which is then a manifold) have already been mentioned in \cite{Grunewald2008}. There is an extension \ref{55} of $C^\ast$-algebras naturally associated to the inclusion of the boundary component into this universal space. We use this to show that the equivariant $K$-homology of $\Gamma$ forms a 6-term exact sequence (sequence \ref{Hy2LHS}) with topological $K$-homology of a disjoint sum of tori and the Kasparov K-homology of the crossed product $\Gamma \lcross \Hy$. The latter, in turn, is determined (lemma \ref{IsZero}) by the K-homology of the subset $X_\circ:= X\cap \Hy$ of the Fl\"oge complex $X$. It is necessary to work with $X_\circ$ instead of $X$ in order to overcome the problem that $X$ is not locally finite. The space $X_\circ$ is naturally endowed with a locally compact topology, so there is no problem in associating a well-behaved commutative $C^\ast$-algebra to it. The goal of section \ref{AHS} is to introduce the spectral sequence used to compute the K-homology of $X_\circ$. In principle, the Atiyah-Hirzebruch spectral sequence starting from Bredon homology associated to a group action on a complex is the classical tool to compute its equivariant K-homology. That spectral sequence is explained in \cite{Mislin2003} and was used in \cites{Sanchez2008,Rahm2010a,Rahm2011}. Here, we have to consider a slightly more general form of the spectral sequence (lemma \ref{Bigone}) in order to be able to treat the space $X_\circ$ instead of $X$. This generalization is done by identifying the Atiyah-Hirzebruch spectral sequence for the equivariant K-homology of a complex as the Schochet spectral sequence associated to a filtration by closed ideals of a $C^\ast$-algebra. In this case, these ideals come from the intersections of the skeleta of $X$ with $\Hy$. At that point, all technical difficulties are resolved, and we can illustrate the computation by means of the concrete example $m=5$ in section \ref{concrete}. This leads to the main theorem \ref{statement}.\\
The Bianchi groups, although far from being amenable, are K-amenable \cite{Cuntz1983}, as they are closed subgroups of a K-amenable group, see \cite{Kasparov1984} for the proof for $\SO(3, 1)$. As a consequence, there is a KK-equivalence between the reduced and full group $C^\ast$-algebra. Therefore, by calculating the K-theory of the former we simultaneously calculate that of the latter. Hence, the Bianchi groups may also be the most interesting lattices for which complete knowledge of the K-theory of the full algebra is now available.\\
I would like to thank Prof. Ralf Meyer very much for several inspiring discussions on the subject, which alone made this paper possible. I would also like to thank Prof. Edgar Wingender for the opportunity to complete the present work under his hood at the Medical Center G\"ottingen.
\section{Fl\"oge's complex, equivariant K-homology and the Borel-Serre boundary}\label{Fl}
\subsection{Fl\"oge's complex}
Briefly, the Fl\"oge complex $X^\bullet$ is defined as follows, see \cites{Floge1983,Rahm2011a} for details and references. Let now $m\in\N$ be square-free, and let $R=\mathcal{O}_{\rationals[\sqrt{-m}\thinspace]}$ and $\Gamma=\PSL(2, R)$. Denote the hyperbolic three-space by $\Hy=\C\times\R^*_+$. The space $\Hy$ is a homogeneous space under the Lie group $G = \PSL(2, \C)$. The Satake (or spherical) boundary $b\Hy$ of $\Hy$ is $\C P^1$, and the action of $G$, and hence that of $\Gamma$, extend continuously to actions on the boundary by M\"obius transformations \cite{Ratcliffe1994}*{Section 12}. A point $D\in \C P^1 -\{\infty\}$ is called a \emph{singular point} if for all $c,d\in R, c\ne 0, Rc+Rd=R$, we have $\left|cD-d\right|\ge 1$.\\
We shall denote the union of all orbits of singular points by $S$. Note that the point $\infty\in \C P^1$ is never in the orbit of a singular point. The set $S$ can either be viewed as a subset of $\C\times \{0\}\subset \C\times \R_{\ge 0}$, or of $\C P^1 - \infty$, or of $b\Hy - \infty$. In the first case, $S$ is identified with the number field $\mathbb{Q}[\sqrt{-m}]$ \cite{Siegel1965}.\\
The set $S$ is a subset of the set of cusps of $\Gamma$ as defined in \cite{Serre1970}. Serre considers the set $P$ of rational boundary points defined as $\Q[\sqrt{-m}]\cup \{\infty\}$ as a subset of the boundary viewed as the projective space $\mathbb{P}^1(\C)$. The action of $\Gamma$ falls into orbits which are in natural bijection with the class group, and the orbit corresponding to the trivial element is the orbit of $\infty$, whereas the set $S$ is the union of all orbits corresponding to non-trivial elements of the class group.\\
Furthermore, one considers the union of all hemispheres
$$
\left\{(z,r): \quad \left|z-\dfrac{\lambda}{\mu}\right|^2+r^2=\dfrac{1}{\left|\mu\right|^2}\right\} \subset \Hy,
$$
for any two $\mu,\lambda$ with $ R\mu+ R\lambda = R$, as well as the ``space above the hemispheres''
$$
B:=\bigl\{(z,r):  \left|cz-d\right|^2+r^2\left|c\right|^2\ge 1\\
 \text{ for all }c,d\in R,c\ne 0\text{ such that }Rc+Rd=R\bigr\}.
$$
Let $\abs{\cdot}$ denote the modulus in $\C$.
\begin{PropDef}[\cite{Floge1983}]
As a set, $\widehat{\Hy}\subset \C\times \R^{\ge 0}$ is the union $\Hy \cup S$ (where we omit, for simplicity, the standard identification $\C\times \{0\} \cong \C P^1 - \infty$.) The topology of $\widehat{\Hy}$ is generated by the topology of $\Hy$ together with the following neighborhoods of the translates $D\in S$ of singular points:
$$
\widehat{U}_\epsilon(D):=
\{D\} \cup \biggl\{ (z, r) \in\Hy : \abs{z - D}^2 + \biggl(r - \frac{\epsilon}{2} \biggr)^2 < \frac{\epsilon^2}{4}\biggr\}
$$
This definition makes $S$ a closed subset of $\widehat{\Hy}$.\\
There is a retraction $\rho$ from $\widehat{\Hy}$ onto the set $X := S\cup \Gamma \cdot \partial B \subset \widehat{\Hy}$ of the union $S$ with all $\Gamma$-translates of $\partial B$, i.~e. there is a continuous map $\rho:\widehat{\Hy}\to X$ such that $\rho(p)=p$ for all $p\in X$. The set $X$ admits a natural structure as a cellular complex $X^\bullet$, such that $\Gamma$ acts cellularly on $X^\bullet$. We shall refer to the complex thus defined, as well as to its structure as a $\Gamma$-subset of $\widehat{\Hy}$, as to the ``Fl\"oge complex''.
\end{PropDef}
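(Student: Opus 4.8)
Granted Fl\"oge's reduction-theoretic construction of the $2$-dimensional spine of $\Hy$, the remaining content is largely point-set. The plan is to (i) check that the prescribed family is a basis of a Hausdorff topology on $\Hy\cup S$, (ii) deduce that $S$ is closed, (iii) extend the retraction onto the spine over $S$ and prove continuity there, and (iv) record the $\Gamma$-CW-structure.

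\emph{The topology and the closedness of $S$.} For fixed $D\in S$ the set $\widehat{U}_\epsilon(D)\setminus\{D\}$ is the open Euclidean ball of radius $\epsilon/2$ about $(D,\epsilon/2)$; it is tangent to $\C\times\{0\}$ at $(D,0)$ and hence lies in $\Hy$, and rewriting its defining inequality as $\abs{z-D}^2+r^2<\epsilon r$ one reads off that it contains no point of $S$, that $\bigcap_{\epsilon>0}\widehat{U}_\epsilon(D)=\{D\}$, and that these balls are nested in $\epsilon$ (common tangency point). The basis axiom is then routine: an intersection $\widehat{U}_{\epsilon_1}(D_1)\cap\widehat{U}_{\epsilon_2}(D_2)$ with $D_1\ne D_2$, or the intersection of such a set with an open subset of $\Hy$, lies in $\Hy$ and is Euclidean-open there, while for $D_1=D_2$ one of the two sets contains the other. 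Hence $\widehat{\Hy}$ induces the original topology on $\Hy$, the $\widehat{U}_\epsilon(D)$ form a neighbourhood basis at $D$, and $\widehat{\Hy}$ is Hausdorff (distinct singular points have horoballs tangent to the plane at distinct points; a singular point $D$ and a point $(z_0,r_0)\in\Hy$ are separated by choosing $\epsilon$ below the positive number $(\abs{z_0-D}^2+r_0^2)/r_0$). Since $\Hy$ is itself a basic open set, $S=\widehat{\Hy}\setminus\Hy$ is closed; moreover $\widehat{U}_\epsilon(D)\cap S=\{D\}$ shows $S$ is discrete in $\widehat{\Hy}$, which is why the subsequent failure of $X$ to be locally finite at the singular points is harmless. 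Finally one checks that the $G$-action on $\Hy$, extended by M\"obius transformations on the boundary, restricts to a continuous $\Gamma$-action on $\widehat{\Hy}$: no $\gamma\in\Gamma$ carries a singular point to $\infty$, so $\gamma S=S$, and $\gamma$ maps horoballs at $D$ to horoballs at $\gamma D$, whence $\gamma(\widehat{U}_\delta(D))\subseteq\widehat{U}_\epsilon(\gamma D)$ for $\delta$ small.

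\emph{The retraction.} From reduction theory (see \cite{Floge1983}) one has a $\Gamma$-equivariant deformation retraction $\rho_\circ\colon\Hy\to\Gamma\cdot\partial B$ of $\Hy$ onto the spine, assembled from the geodesic projection inside each translate $gB$ onto its boundary $g\cdot\partial B$. Put $\rho|_\Hy:=\rho_\circ$ and $\rho(D):=D$ for $D\in S$; as $\Hy$ is open in $\widehat{\Hy}$, $\rho$ is automatically continuous on $\Hy$, so only continuity at the singular points is at issue, and this is the one genuinely geometric step. Fix $D\in S$ and $\epsilon>0$. A point of $\widehat{U}_\delta(D)\cap\Hy$ satisfies $\abs{z-D}^2+r^2<\delta r$, hence has $r<\delta$ and $\abs{z-D}<\delta$, and lies in some translate $gB$. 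The defining property of a \emph{singular} point, $\abs{cD-d}\ge 1$ for all admissible $c,d$, says exactly that $(D,0)$ lies in the closure of $B$; this forces the translates $gB$ reaching into $\widehat{U}_\delta(D)$, for $\delta$ small, to be among the ``small'' ones accumulating at $D$ (their cusps $g\cdot\infty$ being close to $D$, hence of large denominator), so that their boundary caps $g\cdot\partial B$ lie inside a horoball $\widehat{U}_{\eta(\delta)}(D)$ with $\eta(\delta)\to 0$ as $\delta\to 0$. Geodesic projection into such a cap then keeps the image inside $\widehat{U}_\epsilon(D)$ once $\delta$ is small enough, which gives continuity at $D$. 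I expect this uniform estimate on how the caps $g\cdot\partial B$ crowd into a singular point --- the one place where the arithmetic of ``singular'' enters --- to be the main obstacle; it is carried out in \cite{Floge1983}, and in the applications below only the continuity of $\rho$ is needed.

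\emph{The cellular structure.} The spine $\Gamma\cdot\partial B$ carries the $2$-dimensional $\Gamma$-CW-structure coming from the tiling of $\partial B$ ($2$-cells on hemispheres, $1$-cells along their pairwise intersections, $0$-cells at the vertices, $\Gamma$ permuting translates). Adjoining each $D\in S$ as a further $\Gamma$-orbit of $0$-cells --- to which infinitely many $1$-cells abut --- yields the $\Gamma$-CW-complex $X^\bullet$ with underlying set $X=S\cup\Gamma\cdot\partial B$, on which $\Gamma$ acts cellularly by construction. As the statement is careful to distinguish, this CW-structure $X^\bullet$ (weak topology) and the subspace topology that $X$ inherits as a $\Gamma$-subset of $\widehat{\Hy}$ agree away from $S$ but differ at the singular vertices, where the weak topology is strictly finer; both are used in the sequel.
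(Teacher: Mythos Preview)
The paper does not provide its own proof of this Proposition-and-Definition: the result is attributed in the header to \cite{Floge1983} and is simply quoted as established, with the surrounding text only recording further properties (contractibility, etc.) by reference to \cite{Floge1983} and \cite{Rahm2011a}. There is therefore nothing in this paper to compare your argument against; the actual proof lives in Fl\"oge's original article.

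That said, your sketch is a sound outline and correctly isolates the one nontrivial step: continuity of the extended retraction $\rho$ at the singular points, which requires controlling how the caps $g\cdot\partial B$ accumulate toward $D$. You are right to flag this as the place where the arithmetic meaning of ``singular'' enters, and honest in deferring the estimate to \cite{Floge1983}. One point where your description is loose: writing $\rho_\circ$ as ``geodesic projection inside each translate $gB$ onto its boundary $g\cdot\partial B$'' glosses over the fact that the translates $\gamma B$ overlap, so such a prescription is not obviously well-defined. The retraction is rather built on $B$ first (projection along geodesics to $\infty$ down onto $\partial B$, which is well-defined since $B$ is vertically convex) and then propagated by $\Gamma$-equivariance; compatibility on overlaps is part of what must be checked. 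This does not affect the validity of your overall plan, since you already defer the construction of $\rho_\circ$ to the cited source, but it is worth stating accurately.
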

We shall not make use of any topology on $X$, only of the structure as a subset of $\widehat{\Hy}$ and of the combinatorial structure. Since $B$ comprises Bianchi's fundamental polyhedron (which is called $D$ in \cite{Floge1983}), we have $\Gamma \cdot B = \Hy$ and hence this definition of $X$ coincides with Fl\"oge's original definition as the $\Gamma$-closure of $B\cup \{\text{singular points}\}$. Note that the topology of $\widehat{\Hy}$ is not the one inherited from the Satake (spherical) compactification of $\Hy$. However, $\widehat{\Hy}$ coincides on $\Hy$ with the usual topology; furthermore, it is path-connected, locally path-connected and simply connected \cite{Floge1983}*{Satz 1}, and contractible \cite{Rahm2011a}*{Lemma 8}, as is the cellular complex $X^\bullet$ \cite{Rahm2011a}*{Corollary 7}.\\
Since $X$ is not locally finite at the points of $S$, we shall not work with $X$ directly, but only with the pruned intersection $X\cap \Hy = X - S$ of $X$ with $\Hy$. On $X-S$, the cellular topology coincides with the topology inherited from $\Hy$, and is locally compact (unlike that of $\widehat{\Hy}$), whence there are no difficulties in associating a $C^\ast$-algebra $\mathscr{C}_0(X-S)$. Although $X-S$ is not a complex anymore, the $C^\ast$-algebra still possesses a filtration by closed ideals, defined by functions that vanish on the intersections of the skeleta of $X$ with $\Hy$. The associated Schochet-spectral sequence will be one of the keys to the calculation of the equivariant K-homology of $\Gamma$. In the following, the pruned cellular complex $X-S = X\cap\Hy$ endowed with the subset topology of $\Hy$ shall be denoted by $X_\circ$, and its pruned skeleta by $X_\circ^q := X^q \cap X_\circ$.\label{defofcirc}\\
It is important to note that any two edges adjacent to a common singular point in $X$ define half-open intervals that are \emph{disjoint} from each other as subsets of $X_\circ$. Furthermore, the space $X_\circ$ is, as subset of $\Hy$, locally compact. This contrasts the failure of local finiteness of $X$. As a warning, one should not expect to have $X_\circ$ the same equivariant K-homology as the subcomplex of $X$ that consists of all cells that do not touch a singular point although these spaces are equivariantly homotopic. The reason is that these homotopies are not proper since the singular points are at infinity, for the geometry of $X_\circ$. (Recall that the ``compactly supported'' version $\K^*(\mathscr{C}_0 (-))$ of K-homology is only invariant under proper homotopies.)
\subsection{The Borel-Serre boundary} Besides the Fl\"oge construction $\widehat{\Hy}$, we shall need another enlargment of $\Hy$, the Borel-Serre construction $\Hy(P)$ for the set $P = S \cup \Gamma \cdot \{\infty\}$ of rational boundary points. Very roughly speaking, the space $\Hy(P)$ is obtained from $\Hy$ by gluing copies of $\R^2$ onto each $D\in P$ (whereas $\widehat{\Hy}$ was obtained from $\Hy$ by merely gluing copies of a point onto each $D \in S$). Let us recall the setup and notation of Serre's introduction of the Borel-Serre boundary for linear algebraic groups of $\R$-rank one \cite{Serre1970}*{appendix 1}.\\
Serre's notation translates as follows. The space $\Hy$ is hyperbolic space, $b\Hy$ is the ordinary spherical (Satake) boundary of $\Hy$, $D$ is a translate of a singular point or of the point $\infty$, the subgroup $Q_D$ is its stabilizer inside $G$. We are going to view $\Hy$ as a space acted upon by $G$ from the right; thus, there is the natural identification $\Hy\cong K\bs G$. The group $Q_D$ is a minimal parabolic subgroup, and the singular points defined by Fl\"oge naturally embed into the Satake boundary. For instance, consider the simplest choice where $D$ is the origin of the Poincar\'e plane. Then $Q_D \cap \Gamma$ is the set of upper triangular matrices in $\Gamma$, hence isomorphic to $R=\mathcal{O}_{\rationals[\sqrt{-m}\thinspace]}$. Even though there are several orbits of singular points, all stabilizer groups $Q_D \cap \Gamma$ are free abelian of rank two \cite{Rahm2011a}.\\
The group $N_D$ is conjugated inside $G=\PSL(2, \C)$ to the group $\bigl(\begin{smallmatrix}1&\C\\ 0&1\end{smallmatrix}\bigr)$ whence $Q_D / N_D$ is isomorphic to $\C^*$, the multiplicative group of non-zero complex numbers. The space $\Hy_D$ is defined as the union of hyperbolic space with a boundary component $Y_D$, defined as the space of rank one-tori of $Q_D$. In our cases, all $Y_D$ are diffeomorphic to $\R^2$. More specifically, we obtain for $D=\infty$, the regular cusp at infinity of the Riemann sphere:
\begin{align*}
Q_\infty &=
\bigl\{
\bigl(
\begin{smallmatrix}
z&*\\
0&z
\end{smallmatrix}
\bigr)
:z\in\C
\bigr\}\\
T_{\PSU(2), \infty} &:=
\bigl\{
\bigl(
\begin{smallmatrix}
t&0\\
0&1/t
\end{smallmatrix}
\bigr)
:t\in\R^*_+
\bigr\}\\
Y_\infty &=
\bigl\{
\bigl(
\begin{smallmatrix}
1&z\\
0&1
\end{smallmatrix}
\bigr)
:z\in\C
\bigr\}
\end{align*}
Furthermore, there is a unique fixed point $S_K$ of the Cartan involution on $Y_D$ defined by $K$. This defines an Iwasawa decomposition $G = K\cdot A_K \cdot N_D$ associated to the subgroup $K$, where $A_K$ is the neutral connected component of $S_K$, and $.\cdot .$ denotes the ordinary multiplication in the group $G$. The group $A_K$ identifies with $\R_+^\ast$ in a canonical way (induced by the positive root of $S_K$ with respect to the order relation coming from $N_D$), and for $t\in\R_+^\ast$, the associated element in $A_K$ is denoted by $t_K$. These observations permit to topologize the space $\Hy_D$ by means of the family of bijective maps $f_K: \R_+ \times N_D\to \Hy_D$, defined by
$$
f_K(t, n) =
\begin{cases}
(K)\cdot t_K \cdot n\in K\bs G \cong \Hy & t > 0,\thinspace n \in N_D,\\
n^{-1} \cdot S_K\cdot n\in Y_D & t = 0, \thinspace n \in N_D.
\end{cases}
$$
where $K$ is a maximal compact subgroup of $G$. These maps are compatible for different choices of $K$ and therefore define a unique structure of manifold with boundary on $\Hy_D$, independent of $K$.\\
Let now $P$ be the set of cusps of $\Gamma$, i.e. the set of orbits of singular points together with the orbits of the trivial cusp at $\infty$. In Serre's terminology, the space $\Hy(P)$ is the union
$$
\Hy(P) =  \Hy \cup\bigcup_{D \in P} Y_D.
$$
defined by gluing the spaces $\Hy_D$ together along their common intersection $\Hy$. Hence, as a set, $\Hy(P)$ is the disjoint union $\Hy \cup (P \times \R^2)$. Serre shows that $\Hy(P) /\Gamma$ is compact, and called the Borel-Serre compactification of $\Hy/\Gamma$. The space $\Hy(P)$ is its universal cover.\\
Of course, it is also possible to consider a slightly smaller $\Gamma$-space, defined by considering only the subset $S$ of $P$,
$$
\Hy(S) = \Hy \cup\bigcup_{D \in S} Y_D\subset \Hy(P).
$$
in order to obtain a space that surjects onto $\widehat{\Hy}$. In fact, one has
\begin{obs}
There is a natural map $\Hy(S)\to \widehat{\Hy}$ defined by collapsing each boundary component to the corresponding singular point. This map is continuous and surjective.
\end{obs}
This is proved by checking directly that preimages of the open sets that define the topology on $\widehat{\Hy}$ are open in $\Hy(S)$. The relation between the spaces $\Hy, \widehat{\Hy}, \Hy(S)$ and $\partial\Hy(S)$ can be summarized by the equivariant diagram
$$
 \xymatrix@C=3em{
  \Hy(S)\ar[dr]&\partial\Hy(S)\ar[l]\ar[d]\\
  \Hy \ar[r]\ar[u]& \widehat{\Hy}.
 }
$$
The following lemma uses the notion of amenable transformation group, discussed in detail in \cite{Anantharaman2001}.
\begin{Lem}\label{biglemma}
Let $C = \mathscr{C}_0(Y)$ with $Y \in \{\Hy, \partial \Hy(P), \Hy(P), \partial \Hy(S), P\}$. Then the transformation group $(Y, \Gamma)$ is amenable. In particular, the $C^\ast$-algebraic crossed product $\Gamma\lcross C$ is nuclear and unique, i.e. the quotient map from the maximal to the reduced crossed product is an isomorphism.\\
\end{Lem}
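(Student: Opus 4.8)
The plan is to reduce all five cases to two elementary inputs — a proper action of a discrete group has amenable transformation groupoid, and every action of an amenable group is amenable — and then to propagate amenability along the permanence properties recorded in \cite{Anantharaman2001}: invariance under groupoid equivalence, closure under countable disjoint unions, and the fact that a transformation groupoid $\mathcal{G}$ is amenable as soon as its restrictions to a $\Gamma$-invariant open subset of the unit space and to the closed complement of that subset are both amenable. The case $Y=\Hy$ is immediate: since $\Gamma$ is discrete in $G=\PSL(2,\C)$ and $\Hy=K\bs G$, the $\Gamma$-action on $\Hy$ is proper, hence its transformation groupoid is a proper — and therefore amenable — groupoid. (Equivalently, $K$ is compact, so $G$ acts amenably on $K\bs G$, and $(\Hy,\Gamma)$ is a closed subgroupoid of $(\Hy,G)$.)

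For $Y=P$ and $Y=\partial\Hy(P)=\bigsqcup_{D\in P}Y_D$ I would decompose $P$ into its finitely many $\Gamma$-orbits $\Gamma\cdot D_i$ — the orbit of the cusp $\infty$ together with one orbit for each non-trivial element of the class group — and use that each is $\Gamma$-equivariantly $\Gamma/\Gamma_{D_i}$ with $\Gamma_{D_i}=Q_{D_i}\cap\Gamma$ free abelian of rank two, in particular amenable. Correspondingly the block $\bigsqcup_{D\in\Gamma\cdot D_i}Y_D$ is the induced $\Gamma$-space $\Gamma\times_{\Gamma_{D_i}}Y_{D_i}$, whose transformation groupoid is equivalent to $Y_{D_i}\rtimes\Gamma_{D_i}$; the latter is amenable because $\Gamma_{D_i}\cong\Z^2$ is. Taking the finite disjoint union over the relevant orbits — all of them for $P$ and $\partial\Hy(P)$, the non-trivial ones only for $\partial\Hy(S)=\bigsqcup_{D\in S}Y_D$ — shows that $(P,\Gamma)$, $(\partial\Hy(P),\Gamma)$ and $(\partial\Hy(S),\Gamma)$ are amenable.

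Finally, for $Y=\Hy(P)$, the subset $\Hy$ is a $\Gamma$-invariant open subset of the manifold-with-boundary $\Hy(P)$ with closed $\Gamma$-invariant complement $\partial\Hy(P)$; by the two previous steps both pieces have amenable transformation groupoid, hence so does $(\Hy(P),\Gamma)$. The ``in particular'' clause is then the standard consequence \cite{Anantharaman2001} that amenability of $(Y,\Gamma)$ makes the crossed product $\Gamma\lcross\mathscr{C}_0(Y)$ nuclear and forces the quotient map from the maximal to the reduced crossed product to be an isomorphism. I expect the only genuinely delicate points to be the $\Hy(P)$ step — confirming that $\Hy$ really is an invariant open subset of the Borel-Serre space so that the open/closed permanence property applies — and the bookkeeping identifying $\bigsqcup_{D\in\Gamma\cdot D_i}Y_D$ with an induced $\Gamma$-space; everything else is a routine application of the permanence properties.
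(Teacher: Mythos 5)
Your proposal is correct, and in two of the five cases it takes a genuinely different route from the paper. For $Y=\Hy$ the two arguments are interchangeable: the paper invokes the two-closed-subgroups example $(G/K,\Gamma)$ of \cite{Anantharaman2001}*{Example 2.7(5)}, while you invoke properness of the $\Gamma$-action on $K\bs G$; both rest on compactness of $K$ and discreteness of $\Gamma$. For the boundary pieces the paper observes that the $\Gamma$-action on $\partial\Hy(P)$ is free and proper and exhibits an explicit witness $g_i(x,\gamma)=h(\gamma^{-1}x)$ for the amenability conditions, whereas you identify each orbit of boundary components with the induced space $\Gamma\times_{\Gamma_{D_i}}Y_{D_i}$ and use invariance of amenability under groupoid equivalence to reduce to the amenable group $\Z^2$; your route has the advantage of treating $P$, $\partial\Hy(P)$ and $\partial\Hy(S)$ uniformly (the paper's proof never explicitly addresses $\partial\Hy(S)$), at the cost of the induced-space bookkeeping you yourself flag. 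The most substantive divergence is at $Y=\Hy(P)$: you apply the open/closed permanence property of amenable groupoids directly to the invariant open subset $\Hy$ with closed complement $\partial\Hy(P)$, while the paper first deduces nuclearity of $\Gamma\lcross\mathscr{C}_0(\Hy(P))$ from stability of nuclearity under extensions and then recovers amenability of the transformation group from nuclearity using property $(W)$ of the discrete group $\Gamma$ and \cite{Anantharaman2001}*{Theorem 5.8}. Your argument stays entirely on the groupoid side and is arguably the cleaner of the two; the paper's detour through nuclearity is what its appeal to property $(W)$ is for. In both treatments the ``in particular'' clause is \cite{Anantharaman2001}*{Theorem 5.3}, exactly as you say.
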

The set $P$ can be replaced by any $\Gamma$-closed subset of $P$, but we shall not need that fact.
\begin{proof}
By \cite{Anantharaman2001}*{Theorem 5.3}, it suffices to show amenability of the transformation group.\\
For $C = \mathscr{C}_0(\Hy)$, write the symmetric space as a coset space $\Hy = G/K$ where $K$ is a maximal compact subgroup of $G$. For two closed subgroups $\Gamma$ and $K$ of a locally compact group $G$, the associated transformation group $(G/K, \Gamma)$ is an amenable transformation group \cite{Anantharaman2001}*{Example 2.7(5)}.\\
For $C = \mathscr{C}_0(\partial\Hy(P))$, observe that the action of $\Gamma$ on $\partial\Hy(P)$ is proper in the sense that the map
$$
\Gamma \times \partial \Hy(P) \to \partial\Hy(P) \times \partial\Hy(P)
$$
defined by $(\gamma, x)\mapsto (\gamma x,x)$ is topologically proper because the action is free and cocompact. Any proper action defines an amenable transformation group. In fact, the functions $g_i(x,\gamma) = h(\gamma^{-1}x)$ satisfy the conditions of \cite{Anantharaman2001}*{Propositions 2.2(2)} where $h$ is a continuous non-negative function on $\partial\Hy(P)$ such that $\sum_{\gamma\in\Gamma} h(\gamma^{-1}x) = 1$ for all $x\in \partial \Hy(P)$.\\
Form the associated extension of maximal crossed products associated to the invariant ideal $\mathscr{C}_0(\Hy)$ in $\mathscr{C}_0(\Hy(P))$ (for maximal crossed products, exactness is automatic). We have shown that the ideal and the quotient are nuclear (and therefore coincide with minimal crossed product).  Nuclearity for $Y = \Hy(P)$ then follows from the fact that nuclearity is stable under extensions \cite{Pedersen1979}. Moreover, since $\Gamma$ is discrete, it has property $(W)$ \cite{Anantharaman2001}*{Example 4.4(3)}, and therefore also the transformation groups $(\partial\Hy(P), \Gamma)$ and $(\Hy(P), \Gamma)$ are amenable \cite{Anantharaman2001}*{Theorem 5.8}.\\
For $C = \mathscr{C}_0(P)$, write $P$ as the disjoint union of orbits $P = \cup P_i$ (this is a decomposition in finitely, namely $m$ components, where $m$ is the class number). Any crossed product splits into a direct sum over these components, so it is enough to prove the statement separately for each. On such a component, we can apply \cite{Anantharaman2001}*{Example 2.7(5)} again, this time to the ambient group $\Gamma$ (which is locally compact) and the two closed subgroups $\Gamma$ itself and the stabilizer $\Gamma_i$, since the orbit $S_i$ is then equal to the $\Gamma$-space $\Gamma / \Gamma_i$, and $\Gamma_i\cong\Z^2$ is abelian, hence amenable.
\end{proof}
\subsection{A classifying space}
Lemma \ref{isUP} below uses the following simple criterion, well-known in the literature, for a space to be universal, i.~e. to serve as a model for the classifying space for proper actions for $\Gamma$. Note that whereas $\Hy$ is both a universal $\Gamma$- and $G$-space, the space $\Hy(P)$ is not acted upon by $G$.
\begin{fact}[\cite{Emerson2006}*{after Definition 3}] Any free and proper $\Gamma$-space is universal if and only if it is $H$-equivariantly contractible for any finite subgroup $H\subset\Gamma$.
\end{fact}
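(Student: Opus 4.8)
The plan is to read the criterion off the classical fixed-point characterisation of the classifying space for proper actions, understanding ``$H$-equivariantly contractible'' as ``$H$-equivariantly homotopy equivalent to the one-point $H$-space''. Recall that, with $\Gamma$ discrete as here, a $\Gamma$-CW-complex $X$ is a model for $\underline{E}\Gamma$ precisely when the fixed-point subspace $X^H$ is contractible for every finite subgroup $H\subset\Gamma$ and empty for every infinite one (tom Dieck; see also L\"uck's survey on classifying spaces for families). First I would dispose of the ``infinite'' clause: for a proper action and $x\in X^H$ one has $H\subset\Gamma_x$ with $\Gamma_x$ finite, so $X^H=\emptyset$ whenever $H$ is infinite; for a \emph{free} action this is sharper still, $X^H=\emptyset$ for every $H\ne\{1\}$. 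Hence ``$X$ is universal'' is equivalent to ``$X^H$ is contractible for every finite $H\subset\Gamma$''.

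The second step is to recognise this last condition as $H$-equivariant contractibility by means of the equivariant Whitehead theorem: for a finite group $H$ and an $H$-CW-complex $Y$, the constant map $Y\to\mathrm{pt}$ is an $H$-homotopy equivalence precisely when $Y^K\to\mathrm{pt}$ is a weak homotopy equivalence for every subgroup $K\subset H$, and the $Y^K$ being CW-complexes, weak contractibility here coincides with contractibility. I would apply this to $Y=X$ with the $\Gamma$-action restricted to $H$ --- a $\Gamma$-CW-complex is \emph{a fortiori} an $H$-CW-complex, and if $X$ is only assumed to be a proper $\Gamma$-space one first replaces it, up to $\Gamma$-homotopy, by a $\Gamma$-CW-complex, which is legitimate for proper actions --- obtaining: $X$ is $H$-equivariantly contractible if and only if $X^K$ is contractible for every $K\subset H$.

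Combining the two steps yields the stated equivalence. If $X$ is $H$-equivariantly contractible for every finite $H$, then specialising to $H=K$ gives contractibility of $X^K$ for every finite $K$, and together with the automatic emptiness of the fixed sets of infinite subgroups this is exactly ``$X=\underline{E}\Gamma$''. Conversely, if $X$ is universal then $X^K$ is contractible for all finite $K$, and since every subgroup of a finite $H$ is again finite, the second step converts this back into $H$-equivariant contractibility for each finite $H$. I do not expect a serious obstacle --- the content is a standard repackaging --- the one point needing care being the appeal to the equivariant Whitehead theorem and the (for the proper smooth $\Gamma$-manifolds met here entirely harmless) reduction to a $\Gamma$-CW-model. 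It is worth isolating the case actually in force: if $\Gamma$ has torsion and $X$ is free, then a non-trivial finite $H$ has $X^H=\emptyset\ne\mathrm{pt}$, so $X$ is not $H$-equivariantly contractible and hence not universal; a free proper $\Gamma$-space can thus be universal only when $\Gamma$ is torsion-free, in which case the criterion degenerates to ordinary contractibility of $X$.
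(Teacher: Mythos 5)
The paper does not actually prove this statement: it is imported as a quoted Fact from \cite{Emerson2006}, so there is no in-paper argument to compare yours against. On its own terms your derivation is the standard one and is essentially correct --- the tom Dieck/L\"uck fixed-point characterisation of $\underline{E}\Gamma$, the observation that properness forces $X^H=\emptyset$ for infinite $H$, and the equivariant Whitehead theorem converting ``$X^K$ contractible for all $K\subset H$'' into $H$-equivariant contractibility. The one soft spot is the replacement of a general proper $\Gamma$-space by a $\Gamma$-CW-complex up to $\Gamma$-homotopy, which is not automatic without some point-set hypothesis; you flag this yourself, and it is harmless here since the spaces in play are manifolds with boundary.

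The most valuable part of your write-up is the closing remark, which is not a defect of your proof but of the statement being proved. Read literally, ``free and proper'' gives $X^H=\emptyset$ for every non-trivial finite $H$, so a free proper $\Gamma$-space can be universal only when $\Gamma$ is torsion-free, and the Fact is then vacuous for the Bianchi groups, which contain elements of order $2$ and $3$. Yet Lemma \ref{isUP} applies the criterion to $\Hy(P)$, where only the boundary action is free and the interior has non-trivial finite stabilisers. The criterion in \cite{Emerson2006} is stated for proper (not free) $G$-spaces, and the word ``free'' here should be deleted; your argument, with the freeness clause removed so that one retains ``$X^H$ contractible for all finite $H$'' instead of concluding emptiness, proves exactly the corrected statement that the paper actually uses.
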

\begin{Lem}\label{isUP}
For any $\Gamma$-closed subset $S\subset P$, the universal cover $\Hy(P)$, as constructed in \cite{Serre1970}*{appendix 1}, of the Borel-Serre compactification of $\Gamma\backslash\Hy$ is a universal proper $\Gamma$-space.
\end{Lem}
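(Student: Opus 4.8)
The plan is to apply the criterion quoted above: I would check that $\Hy(P)$ is a proper $\Gamma$-space and that it is $H$-equivariantly contractible for every finite subgroup $H\subseteq\Gamma$. The argument goes through unchanged with $\Hy(P)$ replaced by $\Hy(S)$ for an arbitrary $\Gamma$-closed subset $S\subseteq P$, so I would state it in that slightly greater generality; only cocompactness of the quotient, which the criterion does not require, is special to the case $S=P$.

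Properness is essentially Serre's theorem: the Borel--Serre construction of \cite{Serre1970}*{appendix 1} is designed precisely so that $\Gamma$ acts smoothly and properly on the manifold-with-boundary $\Hy(S)$, with compact quotient when $S=P$. If one prefers to see it directly, the interior $\Hy=G/K$ carries a proper $\Gamma$-action because $\Gamma$ is discrete in $G$ and $K$ is compact; the boundary $\partial\Hy(S)=\bigsqcup_{D\in S}Y_D$ carries a proper, in fact free and cocompact, $\Gamma$-action, as already observed in the proof of Lemma \ref{biglemma}, each $\Gamma_D=Q_D\cap\Gamma\cong\Z^2$ acting on $Y_D\cong\C$ as a rank-two lattice of translations; and properness then propagates from these two pieces to all of $\Hy(S)$ by means of a $\Gamma$-invariant collar of the boundary.

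For equivariant contractibility I would invoke the standard fixed-point criterion for $\Gamma$-CW-complexes: $\Hy(S)$ is $H$-equivariantly contractible for every finite $H\subseteq\Gamma$ as soon as the fixed-point set $\Hy(S)^K$ is contractible for every finite subgroup $K\subseteq\Gamma$; an equivariant triangulation of the smooth proper action, or Serre's cell structure, supplies the required $\Gamma$-CW-structure. The geometric point is that the boundary components are pairwise disjoint and are permuted by $\Gamma$ via $\gamma\cdot Y_D=Y_{\gamma D}$, so an element of $\Gamma$ fixing a point of some $Y_D$ must stabilise $Y_D$, hence lie in $\Gamma_D$, which by \cite{Rahm2011a} is free abelian of rank two and in particular torsion-free. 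Consequently every nontrivial finite $K\subseteq\Gamma$ acts freely on $\partial\Hy(S)$, so $\Hy(S)^K=\Hy^K$; and since $\Hy$ is a Hadamard manifold the finite group $K$ fixes the circumcentre of any of its orbits, so $\Hy^K$ is a nonempty totally geodesic, hence convex and contractible, submanifold of $\Hy$. Finally $\Hy(S)^{\{1\}}=\Hy(S)$ is itself contractible, being homotopy equivalent via the collar of its boundary to the contractible space $\Hy$. This verifies both hypotheses of the criterion and proves the lemma.

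I expect the only genuinely non-formal ingredients to be the two facts imported from reduction theory: properness of the $\Gamma$-action on $\Hy(P)$ up to and including the boundary, which is built into Serre's construction, and the torsion-freeness of the parabolic stabilisers $\Gamma_D\cong\Z^2$, which is where the specific arithmetic of the Bianchi groups enters, via \cite{Rahm2011a}. The only mild subtlety to watch is that the criterion demands $H$-equivariant contractibility rather than plain contractibility, so one really does need the computation of all fixed-point sets; beyond that, everything reduces to soft equivariant homotopy theory together with the elementary $\mathrm{CAT}(0)$-geometry of hyperbolic $3$-space.
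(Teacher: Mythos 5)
Your proposal is correct, and for the half of the argument that carries real content it takes a genuinely different route from the paper. The properness step is essentially the same in both: the paper also observes that the boundary action is free, that each cusp stabilizer acts on its boundary component as $\Z^2$ on $\R^2$, and that properness of $\Hy(P)$ follows from properness on the interior and on the boundary. For $H$-equivariant contractibility, however, the paper argues through the quotient: $H\backslash\Hy(P)$ is an orbifold with boundary homotopy equivalent to its interior $H\backslash\Hy$, hence contractible, and the paper then asserts that this contraction lifts to an $H$-equivariant contraction of $\Hy(P)$. You instead invoke the equivariant Whitehead/fixed-point criterion and compute the fixed-point sets directly: torsion-freeness of the cusp stabilizers $\Gamma_D\cong\Z^2$ forces $\Hy(P)^K=\Hy^K$ for every nontrivial finite $K$, and $\Hy^K$ is a nonempty convex subset of a Hadamard manifold, hence contractible. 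Your route is arguably the more robust one --- lifting a contraction of a quotient to an equivariant contraction upstairs is not a formal operation, whereas the fixed-point criterion is standard once one grants an equivariant CW (or triangulated) structure, which Illman-type results supply for this smooth proper action; the cost is precisely that extra input. Both arguments ultimately rest on the same two non-formal facts you identify: properness of the Borel--Serre action up to the boundary, and torsion-freeness of the parabolic stabilizers.
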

\begin{proof}
By construction, the action of $\Gamma$ on $\partial \Hy(P)$ is free. Moreover, for each boundary component we can choose an isomorphism of its (non-pointwise) stabilizer with $\Z^2$ such that its action on the boundary component is equivariantly homeomorphic to the action of $\Z^2$ on $\R^2$, hence proper. Thus, the action on $\partial\Hy(P)$ is proper. It ensues that $\Hy(P)$ is proper.  Let $H$ be a finite subgroup of $\Gamma$. The quotient $H\backslash \Hy(P)$ is an orbifold with boundary which has the same homotopy type as its interior $H\backslash \Hy$. The latter is contractible because $\Hy$ is a universal proper $\Gamma$-space \cite{Baum1994}*{section 2}. So $H\backslash \Hy(P)$ is contractible. Moreover, since $\Hy(P)$ is contractible, the contraction of $H\backslash \Hy(P)$ thus obtained lifts to an $H$-equivariant contraction of $\Hy(P)$ whence the assertion.
\end{proof}
In fact, the space $\Hy$ itself is the typical example of a universal proper $\Gamma$-space, so $\Hy$ and $\Hy(P)$ can both be used as models for $\underline{E}\Gamma$ and for computation of equivariant K-homology. An isomorphism $R\K^\Gamma_*(\Hy) \to R\K^\Gamma_*(\Hy(P))$ is induced by the inclusion map $\Hy \to \Hy(P)$.
However, $\Hy(P)$ has, by construction, the advantage that it is cocompact unlike $\Hy$. Therefore, we can pass from $R\KK$ to $\KK$ as follows.
\begin{equation}
\begin{split}
R\KK^\Gamma(\mathscr{C}_0(\underline{E}\Gamma), \C) &\cong R\KK^\Gamma(\mathscr{C}_0(\Hy(P)), \C)\\
&\cong \KK^\Gamma(\mathscr{C}_0(\Hy(P)), \C)\\
&\cong \K^*(\Gamma\lcross \mathscr{C}_0(\Hy(P))).
\end{split}
\end{equation}
The last isomorphism is the dual Green-Julg theorem \cite{Blackadar1998}*{Theorem 20.2.7(b)}. Since the crossed product is nuclear, every ideal is semisplit \cite{Blackadar1998}*{Theorem 15.8.3}.\\
Throughout the paper, the reader should bear in mind that the notations $\KK^*(-,-)$ and $\K^*(-)$ refer to the ``original'' Kasparov KK-groups instead of the compact-support group $R\KK$. (We shall prefer to work with the former.) For a commutative $C^\ast$-algebra $\mathscr{C}_0(X)$, the group $\K^*(\mathscr{C}_0(X))$ is K-homology with locally finite support, rather than the usual group with compact support. Locally finite K-homology is called K-homology with compact support in \cite{Mislin2003}.
\subsection{$C^\ast$-extensions}
We can now make use of 6-term sequences which are available in $\KK$, in contrast to $R\KK$. The space $X_\circ$ was defined in subsection \ref{defofcirc}. The sequences
\vspace{-1em}
\begin{equation}\label{55}
\text{
\begin{minipage}{.6\textwidth}
$$
0 \to \mathscr{C}_0(\Hy - X_\circ) \to \mathscr{C}_0(\Hy) \to \mathscr{C}_0(X_\circ) \to 0
$$
$$
0 \to \mathscr{C}_0(\Hy) \to \mathscr{C}_0(\Hy(P)) \to \mathscr{C}_0(\partial \Hy(P)) \to 0,
$$
\end{minipage}
}
\end{equation}
defined by the respective evaluation maps, are exact and of course equivariant. Note that in the latter sequence we choose to work with $\Hy(P)$ instead of $\Hy(S)$ because only $\Hy(P)$ is cocompact, whence leading to equivariant K-homology.\\
Note that none of these algebras have a unit. We can take reduced crossed products by $\Gamma$. (We have shown that they coincide with the maximal ones.) It is well-known \cite{Williams2007}*{Rem. 7. 14} that a $\Gamma$-invariant ideal $I$ in a coefficient $\Gamma$-algebra $C$ induces a short exact sequence of reduced crossed products $0 \to \Gamma\lcross I \Gamma\lcross C\to \Gamma \lcross C/I \to 0$ when $\Gamma$ is discrete. Therefore, we arrive at a short exact sequence of reduced crossed products with the sequences \ref{55}. These induce exact 6-term sequences in K-homology $\KK(-, \C).$ For this, note that all algebras occurring in extensions throughout this paper are nuclear, and therefore all extensions are semisplit, so excision in KK-theory holds.\\
For a countable discrete group $\Gamma$, there is an identification of $\KK^\Gamma(A, C)$ with $\KK(\Gamma\lcross A, \C)$ \cite{Blackadar1998}.\\
The K-homology of the quotient $C^\ast$-algebras is easy to compute. To start with the second extension, the action of $\Gamma$ on $\partial \Hy(P)$ is free and proper, so there is a strong Morita equivalence $\Gamma\lcross \mathscr{C}_0(\partial \Hy(P)) \sim C(\Gamma\bs \partial\Hy(P))$. Let $k$ denote the class number of the underlying number field. Then the number of orbits of singular points is $k$. Hence, the quotient space is a disjoint union of $k$ compact 2-tori \cite{Serre1970}, so we have
\begin{equation}\label{tori}
\K^*(\Gamma\lcross \mathscr{C}_0(\partial \Hy(P))) \cong 
\begin{cases}
\mathbb{Z}^{2k}, & * = 0\\
\mathbb{Z}^{2k}, & * = 1.
\end{cases}
\end{equation}
Hence, the second ses. of \ref{55} allows to compute the equivariant K-homology of to compute that of $\Hy(P)$ from $\Hy$.\\
The former ses. of \ref{55}, in turn, allows to compute that of $\Hy$ from that of $X_\circ$. As we have shown in lemma \ref{isUP}, the space $\Hy(P)$ is a classifying space, so this will achieve the computation.\\
The interior of Bianchi's fundamental polyhedron has trivial stabilizer \cite{Rahm2011a}. Therefore, the action on $\Hy - X_\circ$ is free and proper so $\Gamma\lcross \mathscr{C}_0(\Hy - X_\circ)$ is Morita equivalent to $\mathscr{C}_0(\Gamma \bs (\Hy - X_\circ))$ which is isomorphic to $\mathscr{C}_0$ of an open 3-cell, i.e. $\mathscr{C}_0(\R^3)$. Therefore,
$$
\KK_*(\Gamma\lcross \mathscr{C}_0(\Hy - X_\circ), \C) \cong
\begin{cases}
0, &* = 0\\
\Z, &* = 1.
\end{cases}
$$
Thus, we can compute the left hand side $\K^*(\Gamma\lcross \mathscr{C}_0(\Hy(S)))$ via successive computation of the groups $\K^*(\Gamma\lcross \mathscr{C}_0(X_\circ))$ and $\K^*(\Gamma\lcross \mathscr{C}_0(\Hy))$ without support condition, by the exact 6-term sequences
\begin{equation}\label{X2Hy}
  \xymatrix{
    \K^0(\Gamma\lcross \mathscr{C}_0(X_\circ)) \ar[r] & \K^0(\Gamma\lcross \mathscr{C}_0(\Hy)) \ar[r] & 0 \ar[d]\\
    \Z \ar[u] & \K^1(\Gamma\lcross \mathscr{C}_0(\Hy)) \ar[l] & \K^1(\Gamma\lcross \mathscr{C}_0(X_\circ)) \ar[l]
  }
\end{equation}
and, using \ref{tori},
\begin{equation}\label{Hy2LHS}
  \xymatrix{
    \mathbb{Z}^{2k} \ar[r] & \LHSnaught \ar[r] & K^0(\Gamma\lcross \mathscr{C}_0(\Hy)) \ar[d]\\
    K^1(\Gamma\lcross \mathscr{C}_0(\Hy))\ar[u] & \LHSone \ar[l] & \mathbb{Z}^{2k} \ar[l]
  }
\end{equation}
which paves the way to reduce the computation of the equivariant K-homology of $\Gamma$ to that of the $K$-homology of the reduced crossed product and that of the boundary tori.
\begin{rem}
The invertible in $\KK^\Gamma_1(\C, \mathscr{C}_0(\Hy))$ (namely, Kasparov's dual-Dirac element) leads to an isomorphism $K^*(\Gamma\lcross \mathscr{C}_0(\Hy))\cong \K^{*+1}(\Cmax\Gamma) = R_{*+1}(\Gamma)$. Therefore, rewriting the 6-term sequence \ref{Hy2LHS} and using the fact that the assembly map is an isomorphism, we arrive at the following exact 6-term-sequence.
\begin{equation}\label{54}
  \xymatrix{
    \mathbb{Z}^{2k} \ar[r] & \K_0(\Cred\Gamma)\ar[r] & R_1(\Gamma) \ar[d]\\
    R_0(\Gamma)\ar[u] & \K_1(\Cred\Gamma) \ar[l] & \mathbb{Z}^{2k} \ar[l]
  }
\end{equation}
where $R(\Gamma) = \KK^\Gamma(\C, \C)$ is the Kasparov representation ring. This is remarkable since there are rarely exact sequences available that connect K-theory and K-homology of the same algebra (in view of $R(\Gamma) = \KK(\Cmax\Gamma, \C) = \K^*(\Cred\Gamma)$ by K-amenability.)
\end{rem}
\begin{Lem}\label{IsZero}
The connecting homomorphism of the 6-term-sequence \ref{X2Hy} is zero, so there is an isomorphism $\K^0(\Gamma\lcross \mathscr{C}_0(X_\circ))\cong \K^0(\Gamma\lcross \mathscr{C}_0(\Hy))$ and a short exact sequence
$$
0\to \K^1(\Gamma\lcross \mathscr{C}_0(X_\circ)) \to \K^1(\Gamma\lcross \mathscr{C}_0(\Hy)) \to \Z \to 0.
$$
\end{Lem}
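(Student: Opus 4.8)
The plan is to read the $6$-term sequence \ref{X2Hy} as the $\K$-homology sequence $\KK(-,\C)$ of the extension of reduced crossed products coming from $0 \to \mathscr{C}_0(\Hy - X_\circ) \to \mathscr{C}_0(\Hy) \to \mathscr{C}_0(X_\circ) \to 0$, and to exploit its geometry. Since $\K^0(\Gamma\lcross\mathscr{C}_0(\Hy - X_\circ))=0$, the connecting map issuing from it is automatically zero, so the whole statement reduces to showing that the other connecting map $\partial\colon \K^1(\Gamma\lcross\mathscr{C}_0(\Hy - X_\circ))\cong\Z \to \K^0(\Gamma\lcross\mathscr{C}_0(X_\circ))$ vanishes. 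By exactness of \ref{X2Hy}, $\partial=0$ is equivalent to surjectivity of the restriction homomorphism $r\colon \K^1(\Gamma\lcross\mathscr{C}_0(\Hy)) \to \K^1(\Gamma\lcross\mathscr{C}_0(\Hy - X_\circ))\cong\Z$ induced by the ideal inclusion; granting this, $\K^0(\Gamma\lcross\mathscr{C}_0(X_\circ))\to\K^0(\Gamma\lcross\mathscr{C}_0(\Hy))$ becomes both injective and surjective, and the bottom row of \ref{X2Hy} collapses to the claimed short exact sequence. So I would spend all the effort on surjectivity of $r$.

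For that I would produce an explicit preimage of a generator using the equivariant fundamental class of $\Hy$. Hyperbolic $3$-space is a complete, odd-dimensional Riemannian $\Gamma$-manifold and carries Kasparov's Dirac element $\alpha\in\KK^\Gamma_1(\mathscr{C}_0(\Hy),\C)=\K^1(\Gamma\lcross\mathscr{C}_0(\Hy))$ --- concretely, the inverse of the (invertible) dual-Dirac element of the remark above, and the $\KK$-theoretic incarnation of the fundamental class of $\Hy$. The homomorphism $r$ is nothing but restriction of $\KK$-cycles along the inclusion of the invariant open submanifold $\Hy - X_\circ\hookrightarrow\Hy$, and because the Dirac element is represented by a local (first-order elliptic) operator, it restricts to the corresponding Dirac element $\alpha_{\Hy-X_\circ}\in\KK^\Gamma_1(\mathscr{C}_0(\Hy - X_\circ),\C)$; this is the locality of the fundamental class. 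As recalled just before the statement, the interior of Bianchi's fundamental polyhedron has trivial stabiliser, so $\Gamma$ acts freely and properly on $\Hy - X_\circ$ with quotient the open $3$-cell $\R^3$; hence the Morita equivalence $\Gamma\lcross\mathscr{C}_0(\Hy - X_\circ)\sim\mathscr{C}_0(\R^3)$ used there identifies $\KK^\Gamma_1(\mathscr{C}_0(\Hy - X_\circ),\C)$ with $\KK_1(\mathscr{C}_0(\R^3),\C)\cong\Z$ and carries $\alpha_{\Hy-X_\circ}$ to the Dirac element of $\R^3$, by descent of the Dirac operator along the Riemannian $\Gamma$-covering $\Hy - X_\circ\to\R^3$. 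Since the Dirac element of $\R^3$ is a generator of $\KK_1(\mathscr{C}_0(\R^3),\C)=\Z$, the image $r(\alpha)$ is a generator, $r$ is onto, and $\partial=0$.

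The main obstacle --- indeed the only step that is not pure diagram-chasing --- is the chain of identifications in the middle: that restricting the equivariant Dirac element of $\Hy$ to the open piece $\Hy - X_\circ$ gives the Dirac element there, and that this in turn corresponds, under the Morita equivalence, to the Dirac element of the base $\R^3$ of the covering. Both are standard manifestations of the locality of the fundamental $\K$-homology class, but they are exactly where one needs $\Hy - X_\circ$ to be a genuine $\Gamma$-invariant open submanifold of $\Hy$ with free $\Gamma$-action --- one of the reasons for replacing the non-locally-finite complex $X$ by the pruned complex $X_\circ$. With that granted, the rest is bookkeeping with \ref{X2Hy}, using $\K^0(\Gamma\lcross\mathscr{C}_0(\Hy - X_\circ))=0$ to kill the second connecting homomorphism for free.
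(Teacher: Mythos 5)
Your reduction is exactly the one the paper makes: both proofs observe that $\K^0(\Gamma\lcross\mathscr{C}_0(\Hy-X_\circ))=0$ disposes of one connecting map, and that everything hinges on surjectivity of the restriction $r\colon \K^1(\Gamma\lcross\mathscr{C}_0(\Hy))\to\K^1(\Gamma\lcross\mathscr{C}_0(\Hy-X_\circ))\cong\Z$, identified via the Morita equivalence coming from the free proper action on the complement of $X_\circ$ with quotient the open cell $\mathcal{D}\cong\R^3$. Where you diverge is in how surjectivity is established. The paper dualizes: it considers the $\KK(\C,-)$-groups, maps the Bott generator of $\K_1(\mathscr{C}_0(\mathcal{D}))$ into $\K_1(\Gamma\lcross\mathscr{C}_0(\Hy))\cong\K_0(\Cred\Gamma)$ via the dual-Dirac element, and detects the image as the class $[1]$ of the unit, which the canonical trace shows to be non-torsion; surjectivity of $r$ then follows by duality. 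You instead stay in K-homology and exhibit an explicit preimage: the equivariant Dirac class of $\Hy$ restricts, by locality of the fundamental class, to the Dirac class of the invariant open submanifold $\Hy-X_\circ$, which descends under the Morita equivalence to the generator of $\KK_1(\mathscr{C}_0(\R^3),\C)$. Both routes rest on the same Dirac/dual-Dirac machinery, but yours has the merit of being self-dual: it avoids the final duality step, which in the paper is left implicit and strictly speaking needs the observation that the trace splits off $[1]$ as a direct summand of $\K_0(\Cred\Gamma)$ (injectivity of a map $\Z\to\K_1(A)$ alone would not force surjectivity of the dual map on K-homology). The price you pay is that you must justify the two locality/descent identifications of Dirac classes, which are standard but are precisely the analytic content of the step; provided you cite them properly, your argument is complete and, if anything, slightly tighter than the one in the text.
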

\begin{proof}
Let $\mathcal{D}\subset \Hy$ denote the interior of the Bianchi fundamental polyhedron which is called $D$ in \cite{Floge1983} (in the present manuscript, the letter $D$ is already occupied); hence, there are homeomorphisms $\mathcal{D} = \R^3$ and $\Hy - X_\circ = \Gamma \times \mathcal{D}$ because the interior of $\mathcal{D}$ is trivially stabilized. It is well known that the KK-equivalence coming from the strong Morita-Rieffel equivalence
$$
\Gamma\lcross \mathscr{C}_0(\Hy - X_\circ) \sim \mathscr{C}_0(\mathcal{D})
$$
is induced by the inclusion of $C^\ast$-algebras $\mathscr{C}_0(\mathcal{D}) \to \Gamma \lcross \mathscr{C}_0(\Hy - X_\circ)$ where the first algebra is viewed as a crossed product with the trivial group, and a function on $\mathcal{D}$ is viewed as a function on $\Hy - X_\circ$ by setting it to zero outside $\mathcal{D}$.\\
Consider the map of short exact sequences
\begin{equation}
\xymatrix{
0 \ar[r]& \mathscr{C}_0(\mathcal{D})\ar[d] \ar[r]^{=} & \mathscr{C}_0(\mathcal{D}) \ar[r]\ar[d]  & 0 \ar[r]\ar[d] & 0\\
0 \ar[r]& \Gamma\lcross \mathscr{C}_0(\Hy - X_\circ) \ar[r]& \Gamma\lcross \mathscr{C}_0(\Hy) \ar[r] & \Gamma\lcross \mathscr{C}_0(X_\circ) \ar[r] &0
  }
\end{equation}
We claim that the center vertical arrow induces a surjective map
$$
K^1(\Gamma\lcross \mathscr{C}_0(\Hy)) \to \K_1(\R^3)\cong\Z
$$
in K-homology (recall that we are dealing with locally finite homology). The assertion then follows.\\
Consider the following diagram of $\KK(\C, -)$-theory groups,
$$
\xymatrix{
 & \K^1(\mathcal{D})\ar[d]\\
\K^0(\text{pt}) \ar[ur]\ar[r] \ar[d]& \K^1(\Hy)\ar[d]\\
\K_0(\Cred\Gamma)\ar[r] & \K_1(\Gamma\lcross\Hy)
}
$$
where all vertical arrows are induced by $C^ \ast$-inclusions, the diagonal arrow is the Bott isomorphism associated to the standard orientation of $\R^3$, the upper horizontal arrow is Bott isomorphism, and the lower horizontal arrow is multiplication with Kasparov's dual-Dirac element $\beta \in \KK_1^\Gamma(\C, \mathscr{C}_0(\Hy))$. The commutativity of the triangle is inherent in the definition of the Bott element, and that of the square follows from the fact that the forgetful homomorphism $\KK^\Gamma(\C, \mathscr{C}_0(\Hy)) \to \KK(\C,\Hy)$ sends $\beta$ to the Bott element.\\
All arrows except the two lower vertical arrows are isomorphisms (recall that $\beta$ is invertible). It follows that the dual map $ \K^1(\R^3) \to K_1(\Gamma\lcross \mathscr{C}_0(\Hy))$ is injective, since the class of the unit in $\K_0(\Cred\Gamma)$ is non-zero and non-torsion. It is classical that this follows from the existence of the trace map $\K_0(\Cred\Gamma)) \to \Z$ which sends $[1]$ to $1$.  This completes the proof.\\
\end{proof}
\section{The Atiyah-Hirzebruch spectral sequence}\label{AHS}
\subsection{Bredon homology}\label{Bredon}
Bredon homology is the main tool for computation of equivariant K-homology in similar contexts \cites{Sanchez2008,Rahm2010a,Rahm2011}, so we refer to these references for a more thorough introduction. Here, we shall only fix the notation.\\
Recall the usual terminology for Bredon homology: The orbit category $\mathcal{O}_\Gamma$ of $\Gamma$ has an object $\Gamma / H$ for each subgroup $H$ of $\Gamma$, and all $\Gamma$-maps between any two objects as morphisms. A Bredon module is a functor $\mathcal{O}_\Gamma \to \textit{Ab}$ from the orbit category to abelian groups. Here, we are going to consider the Bredon module $M_q$ that associates to $\Gamma / H$ the K-homology group $\K^q(\C H)$. This reduces to even degree where it is $\text{R}(H)$ canonically. Let $H\to H'$ be a homomorphism of discrete countable groups. The differentials $\K^q(\Cmax H) \to \K^q(\Cmax H')$ of the Atiyah-Hirzebruch spectral sequence are given by left multiplication by an abstractly defined element $d\in\KK_0(\Cmax H', \Cmax H)$. Bredon homology of a proper complex $Y$ with respect to the family $\Fin$ of finite subgroups with coefficients in the Bredon module $M$ is defined as the homology of the cellular complex with coefficients in $M$
$$
\Homol_p(Y; M) = \Homol_p(C(Y) \otimes_\Fin M) := \Homol_p\bigoplus_{\sigma\in \Gamma\backslash Y^\bullet} M\Gamma_\sigma,
$$
where the sum extending over orbit representatives. In case $H$ and $H'$ are finite, $M_q$ is a functor $\Gamma /H \mapsto RH$, and $d$ defines a map $RH\to RH'$ which reduces to the usual Bredon differential as described in \cite{Sanchez2008}, and in even more detail in \cite{Rahm2010a}. Specifically, the latter reference gives the information about the morphisms induced on the representation rings by the finite group inclusions occurring among stabilizers of Fl\"oge simplices, as well as details about how to simplify the computations in the Pari/GP language~\cite{PARI2}. In fact, the complex representation ring of a finite group as the free $\Z$-module the basis of which are the irreducible characters of the group. These irreducible characters are given by the character tables for the finite subgroups of the Bianchi groups. One has to consider all possible inclusions, and identify the said morphism; then, this information is fed into the program Bianchi.gp in order to obtain the Bredon chain complex, from which we shall easily deduce the information on the modified Bredon chain complex, introduced below.
\subsection{The Schochet spectral sequence for a vertex-pruned cellular complex}
Lemma \ref{IsZero} motivates to calculate the K-homology of $\Gamma\lcross \mathscr{C}_0(X_\circ)$. In the usual setting of a proper cellular complex, there is the Atiyah-Hirzebruch spectral sequence computing equivariant K-homology. Its $E^1$-term is the Bredon complex, and its $E^2$-term is Bredon homology. For the present purpose, we modify this setting in the following way. Note that $X_\circ$ is not a cellular complex. However, there is still a natural notion of $p$-cell, with the difference that there are one-cells that are adjacent to only one vertex. In the cellular complex $X$, there are one- and two-cells $\sigma$ adjacent to singular points. These are compact subsets of $X$. However, their intersection $\sigma\cap \Hy \subset X_\circ$ is, of course, not compact in $X_\circ$. However, this makes no difference with regard to the combinatorial structure: They are still adjacent to the same edges. Since the definition of Bredon homology only takes into account the combinatorial structure, we may still write down a combinatorial Bredon complex for $X_\circ$, and this complex still satisfies $d^2 = 0$.
\begin{df}
We shall refer to this complex as the \emph{modified Bredon complex} and accordingly to its homology as the \emph{modified Bredon homology}, denoted by $\Homol_p(X_\circ; M_q)$ with coefficients in the Bredon module $M_q$.
\end{df}
The following lemma says, roughly speaking, that this modification of Bredon homology is not only a modification of the $E^1$ term, but that in fact this modified $E^1$-term fits into an entire modified spectral sequence whose $E^1$-term is the modified $E^1$-term. Furthermore, this modified spectral sequence computes ``the right thing'', namely the equivariant K-homology of $X_\circ$.\\
Let  $M_q$ denote the Bredon module that associates to $\Gamma / H$ the K-homology group $M_qH := \K^q(\C H)$ as above.
\begin{Lem}\label{Bigone}
There is a homological spectral sequence computing equivariant K-ho\-mo\-lo\-gy,
$$
E^1_{p, q} = \bigoplus_{\sigma \in \Gamma\backslash X^p_\circ} M_q\Gamma_\sigma \implies \K^{p+q}(\Gamma\lcross \mathscr{C}_0(X_\circ)),
$$
where the sum extends over representatives $x$ of orbits of $p$-cells in $X_\circ$, and $\Gamma_x$ is the respective stabilizer isomorphism type. It is concentrated in the first and fourth quadrants, and 2-periodic in the index $q$. Its differential $d^n$ has bidegree $(-n, n - 1)$.\\
The summand of the $E^1$-term belonging to the cells of $X_\circ$ that do not touch a singular point is equal to the usual Bredon complex of these cells. The summand of the $E^1$-term belonging to the one-cells of $X_\circ$ that touch a singular point has only a differential to one vertex orbit. The summand of the $E^1$-term belonging to the two-cells of $X_\circ$ that touch a singular point is equal to the usual Bredon complex associated to the combinatorial structure. Therefore, the $E^2$-term is $\Homol_p(X_\circ; M_q)$.\\
\end{Lem}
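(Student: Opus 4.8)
The plan is to realise the asserted spectral sequence as the Schochet spectral sequence of the skeletal filtration of the $\Gamma$-$C^\ast$-algebra $\mathscr{C}_0(X_\circ)$, and then to compute its $E^1$-page and first differential one orbit of cells at a time, separating the three kinds of cells named in the statement. Since $X^\bullet$ is two-dimensional and $\Gamma$ acts cellularly, the pruned skeleta $\emptyset = X_\circ^{-1}\subseteq X_\circ^{0}\subseteq X_\circ^{1}\subseteq X_\circ^{2} = X_\circ$ are $\Gamma$-invariant and closed in $X_\circ$; recall that on $X_\circ$ the cellular topology agrees with the topology inherited from $\Hy$, and that the $\Gamma$-action on $X_\circ$, being the restriction of the proper action on $\Hy$, is proper. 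Hence the algebras $\mathscr{C}_0(X_\circ - X_\circ^{p})$ form a finite decreasing filtration of $\mathscr{C}_0(X_\circ)$ by $\Gamma$-invariant closed ideals, with subquotients $\mathscr{C}_0(X_\circ^{p} - X_\circ^{p-1})$. Applying $\Gamma\lcross -$, which is exact because $\Gamma$ is discrete and for which the reduced and maximal crossed products coincide (restrict Lemma \ref{biglemma} to the $\Gamma$-invariant locally closed subsets of $\Hy$ that occur, using that amenability of a transformation group passes to $\Gamma$-invariant subsets \cite{Anantharaman2001}), produces a finite filtration of $\Gamma\lcross\mathscr{C}_0(X_\circ)$ by closed ideals. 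All the algebras involved are nuclear, so all sub-extensions are semisplit and $\K^{*}(-) = \KK_{*}(-,\C)$ carries the associated exact couple to a spectral sequence. Its $E^1$-page is supported in $0\le p\le 2$, it is $2$-periodic in $q$ by Bott periodicity, its differential $d^n$ has bidegree $(-n, n-1)$, and, the filtration being finite, it converges to $\K^{p+q}(\Gamma\lcross\mathscr{C}_0(X_\circ))$, the equivariant K-homology in question.

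To identify the $E^1$-term, observe that for each $p$ the space $X_\circ^{p} - X_\circ^{p-1}$ is a disjoint union of open $p$-cells permuted by $\Gamma$ with finitely many orbits, because $\Gamma\backslash X_\circ$ is the finite complex $\Gamma\backslash X$ with its $k$ singular vertices deleted ($k$ the class number). For a representative $\sigma$ with stabiliser $\Gamma_\sigma$, which is finite since $\sigma\subseteq\Hy$ and the action on $\Hy$ is proper, Green's imprimitivity theorem \cite{Williams2007} gives a $\Gamma$-equivariant Morita equivalence $\Gamma\lcross\mathscr{C}_0(\Gamma\cdot\sigma)\sim\Gamma_\sigma\lcross\mathscr{C}_0(\sigma)$; and since $\sigma$ is an open $p$-ball on which (after the standard choice of cell structure) $\Gamma_\sigma$ acts trivially, Bott periodicity applied $p$ times gives $\K^{p+q}\bigl(\Gamma_\sigma\lcross\mathscr{C}_0(\sigma)\bigr)\cong\K^{q}(\C\Gamma_\sigma) = M_q\Gamma_\sigma$. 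Summing over orbit representatives yields $E^1_{p,q} = \bigoplus_{\sigma\in\Gamma\backslash X_\circ^{p}} M_q\Gamma_\sigma$. For the cells that do not meet $S$ this is exactly the input of the classical equivariant Atiyah--Hirzebruch spectral sequence \cite{Mislin2003}; for the cells that do meet $S$ one uses the stabilisers recorded in \cite{Rahm2011a} --- for instance, a $1$-cell joining an ordinary vertex $v$ to a singular point $D$ has trivial stabiliser, being contained in the intersection $\Gamma_v\cap\Gamma_D$ of a finite group with the torsion-free group $\Gamma_D\cong\Z^2$.

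It remains to identify $d^1\colon E^1_{p,q}\to E^1_{p-1,q}$. Under the Morita--Bott identifications above it is the composite of a connecting homomorphism of an extension of subquotients with a quotient map, and this connecting map is local: its component from the summand of a $p$-cell $\sigma$ to that of a $(p-1)$-cell $\tau$ depends only on an arbitrarily small neighbourhood of $\tau$. Because every edge and every ordinary vertex of $X$ lies in $\Hy$, such a neighbourhood in $X_\circ$ coincides with the corresponding one in $X$; hence this component agrees with the corresponding component in the ordinary Atiyah--Hirzebruch spectral sequence of $X$, and the only effect of passing to $X_\circ$ is that the deleted singular vertices do not occur among the $\tau$. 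On the summand spanned by cells that do not touch $S$, $d^1$ is therefore the usual Bredon differential \cite{Sanchez2008}. For a $1$-cell $\sigma$ joining an ordinary vertex $v$ to a singular point $D$, the only $0$-cell surviving in the closure $\overline\sigma\cap X_\circ$ is $v$, so the only non-zero component of $d^1$ out of this summand is the one to the vertex orbit of $v$; this is also visible directly, for $\overline\sigma\cap X_\circ\cong[0,\infty)$ and $\mathscr{C}_0([0,\infty))$ is $\K$-contractible, so the boundary map of $0\to\mathscr{C}_0(\sigma)\to\mathscr{C}_0(\overline\sigma\cap X_\circ)\to\mathscr{C}_0(\{v\})\to 0$ is an isomorphism. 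For a $2$-cell $\sigma$ touching $D$ there are no singular $1$-cells, so every boundary edge of $\sigma$ --- including the half-open ones adjacent to $D$ --- enters $d^1\sigma$ with its ordinary coefficient, i.e. this summand is the usual Bredon complex attached to the combinatorial boundary of $\sigma$. Putting the three cases together, $(E^1_{*,q},d^1)$ is precisely the modified Bredon complex, and hence $E^2_{p,q} = \Homol_p(X_\circ; M_q)$.

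The main obstacle is this last identification of $d^1$ for the cells touching $S$: one must be sure that although these cells are non-compact in $X_\circ$, the connecting homomorphisms of the Schochet spectral sequence still reduce to the same combinatorial incidence operators as in the Bredon complex of $X$, the only change being that a half-open edge meets only one vertex. This rests on the locality of the connecting maps together with the naturality of the exact couple under the Green--Morita equivalences; for the half-open edges the $\K$-contractibility of $\mathscr{C}_0([0,\infty))$ furnishes an independent, fully explicit check. Once this bookkeeping of incidence numbers is settled, the remainder of the argument runs exactly as in the classical equivariant case \cites{Mislin2003,Rahm2010a}.
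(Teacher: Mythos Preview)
Your proof is correct and follows essentially the same route as the paper: both construct the spectral sequence as the Schochet spectral sequence of the filtration of $\Gamma\lcross\mathscr{C}_0(X_\circ)$ by the ideals of functions vanishing on the pruned skeleta, identify the subquotients via Morita equivalence with suspensions of the finite group rings $\C\Gamma_\sigma$, and observe that the only effect of the pruning is that an edge meeting a singular point has a differential to just one vertex orbit. Your treatment is in fact more detailed than the paper's --- in particular your locality argument for $d^1$ and the explicit $\K$-contractibility check for $\mathscr{C}_0([0,\infty))$ spell out what the paper leaves to the remark following its proof.
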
 
\begin{proof} This is a slight generalization of the Atiyah-Hirzebruch spectral sequence. It is constructed as the spectral sequence associated as in \cite{Schochet1981} to the three-step filtration by closed ideals
$$
0\subset \Gamma\lcross \{f: f| X_\circ^1=0 )\subset \Gamma\lcross \{f: f| X_\circ^0=0 )\subset \Gamma\lcross \mathscr{C}_0(X_\circ)
$$
of functions vanishing on the pruned skeleta.\\
 The sub-quotient
$$
(\Gamma\lcross \{f: f| X_\circ^{q-1}=0 )) /(\Gamma\lcross \{f: f| X_\circ^{q}=0 )) = \Gamma\lcross \mathscr{C}_0(X_\circ^q - X_\circ^{q-1})
$$
is a direct sum over the orbits of $q$-cells, and a summand corresponding to the orbit of a cell $c$ stabilized by the (necessarily finite) subgroup $\Gamma_c\subset \Gamma$ is isomorphic to $\mathscr{C}_0(\R^q)\otimes (\Gamma\lcross \mathscr{C}_0(\Gamma c))$ which is strongly Morita equivalent to the $q$-fold suspension of the complex group ring $\C\Gamma_c$ whose K-homology is isomorphic to $R\Gamma_c$. This yields the Atiyah-Hirzebruch spectral sequence for equivariant K-homology, upon implementing the $q$-fold dimension shifts pertaining to the cells' dimensions. Its $E^1$ computes (and, therefore, its $E^2$-term is) Bredon homology with respect to finite subgroups \cite{Mislin2003}, as it is considered in \cite{Sanchez2008} for computation of equivariant K-homology.
\end{proof}
The fact that we deal with pruned skeleta introduces no peculiarities since on the level of sub-quotients it only means to pay attention to the fact that for each edge $e$ touching a singular point, there is only one inclusion of vertex stabilizers instead of two (Recall that each edge touches at most one singular point).
\begin{ex}
It is easy to visualize the modified Atiyah-Hirzebruch spectral sequence thus defined for a very simple toy case of a pruned cellular complex, for instance the case of a single edge acted upon by the trivial group. One can then compare with the Schochet spectral sequence associated to a half-open interval with the filtration $0\subset \mathscr{C}_0(\R^*_+) \subset \mathscr{C}_0(\R_{\ge 0})$, where the endpoint is the single simplex. The latter's $E^1$-term takes the form $\Z \leftarrow \Z$ in even rows, and zero in odd rows, according to a single edge and a single point. The homology of this vanishes, in accordance with $\K^*(\mathscr{C}_0(\R_{\ge 0})) =0$. Bearing this example in mind might help in understanding the slight generalization of the Atiyah-Hirzebruch spectral sequence to pruned cellular complexes, considered here. Roughly speaking, ``it is enough to omit the missing points from the spectral sequence''.
\end{ex}
\subsection{Short exact sequences}
Let us write down how to pass, in our case where the spectral sequence is applied to the algebra $\Gamma\lcross \mathscr{C}_0(X_\circ)$, from $E^\infty$ to the K-homology using the edge homomorphisms.
\begin{cor}
The odd rows of the $E_2$-term vanish since $K_1$ of a finite-dimensional $C^\ast$-algebra such as a finite group's ring necessarily vanishes. Since $\text{dim}\ X_\circ = 2$, the spectral sequence is concentrated in the columns $0 \le p \le2$. Therefore, $E^2 = E^\infty$, and there is an isomorphism
\begin{equation}\label{K1}
E^2_{1, 0} \cong \K^1(\Gamma\lcross \mathscr{C}_0(X_\circ))
\end{equation}
and a short exact sequence
\begin{equation}\label{extension}
0 \to E^2_{0, 2} \to \K^0(\Gamma\lcross \mathscr{C}_0(X_\circ)) \to E^2_{2, 0} \to 0. 
\end{equation}
\end{cor}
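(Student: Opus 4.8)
The plan is to extract everything formally from the spectral sequence of Lemma~\ref{Bigone}, using only its stated shape: $E^1_{p,q} = \bigoplus_{\sigma\in\Gamma\backslash X^p_\circ} M_q\Gamma_\sigma$, convergence to $\K^{p+q}(\Gamma\lcross\mathscr{C}_0(X_\circ))$, support in the first and fourth quadrants, $2$-periodicity in the index $q$, and a differential $d^n$ of bidegree $(-n,n-1)$.

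First I would kill the odd rows. Every cell stabilizer $\Gamma_\sigma$ in $X_\circ$ is finite, so $\C\Gamma_\sigma$ is a finite-dimensional semisimple $C^\ast$-algebra, i.e.\ a finite product of matrix algebras over $\C$; hence $M_q\Gamma_\sigma = \K^q(\C\Gamma_\sigma)$ is a finite sum of copies of $\K^q(\C)$ by Morita invariance, and $\K^1(\C)=0$. Thus $E^1_{p,q}=0$, and therefore $E^2_{p,q}=0$, whenever $q$ is odd. Next, since $\dim X_\circ = 2$ there are no pruned cells outside dimensions $0,1,2$, so $E^1_{p,q}=0$, and hence $E^2_{p,q}=0$, unless $0\le p\le 2$.

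Now I would check degeneration at the $E^2$-page. For $n\ge 2$ the differential $d^n\colon E^n_{p,q}\to E^n_{p-n,\,q+n-1}$ vanishes: when $n$ is even, $n-1$ is odd, so $d^n$ runs between an even and an odd row and one of these is zero; when $n$ is odd we have $n\ge 3$, so nonvanishing of both source and target would force $0\le p\le 2$ together with $n\le p\le n+2$, which is impossible. Hence $E^2=E^\infty$.

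Finally I would run the edge homomorphisms, i.e.\ read off the filtration of the abutment. Because only even rows survive and the columns lie in $p\in\{0,1,2\}$, the filtration of $\K^1(\Gamma\lcross\mathscr{C}_0(X_\circ))$ has a single nonzero graded piece, namely the one in column $p=1$, which $2$-periodicity identifies with $E^2_{1,0}$; this is the isomorphism~\eqref{K1}. In even total degree only the columns $p=0$ and $p=2$ contribute (the $p=1$ piece lies in an odd row); since $p=0$ is the bottom of the filtration and $p=2$ the top, the resulting two-step filtration of $\K^0(\Gamma\lcross\mathscr{C}_0(X_\circ))$ gives the short exact sequence~\eqref{extension} once $2$-periodicity is used to rewrite $E^\infty_{0,0}\cong E^2_{0,2}$ and $E^\infty_{2,-2}\cong E^2_{2,0}$. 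There is no genuine obstacle; the only thing to watch is the bookkeeping reconciling the fourth-quadrant $q$-indices with the $2$-periodicity, so that exactly two graded pieces survive and are correctly oriented as subobject and quotient.
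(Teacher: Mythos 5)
Your argument is correct and follows exactly the route the paper intends: the corollary's justification is contained in its own statement (odd rows vanish since $\K^1$ of a finite-dimensional $C^\ast$-algebra is zero, columns are confined to $0\le p\le 2$, hence degeneration at $E^2$ and the standard edge-homomorphism/filtration analysis), and your write-up merely makes the routine checks explicit, including the correct parity and column-support argument for $d^n=0$, $n\ge 2$. No discrepancy with the paper.
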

\section{Explicit calculations}\label{concrete}
\begin{table}
\tiny
\makebox[\textwidth][c]{
$
\begin{array}{c|rrrrrrrrrrrrr}
& (b, a) & (b, a) & (v, v_1) & (v, v_1) & (v, v_1) & (a_3, u) & (a_3, u) & (u, b) & (u, b) & (u_1, b) & (u_1, b) & (a, v) & (a, s) \\
\hline
 \text{large cell} & -1 & -1 & -1 & -1 & -1 & -1 & -1 &  &  & -1 & -1 &  & \\
 \text{mid-size cell} & 1 & 1 &  &  &  & 1 & 1 & 1 & 1 &  &  &  & \\
 \text{small cell} &  &  &  &  &  &  &  &  &  &  &  &  & 
\end{array}
$
}
\normalsize
\caption{The transpose of the differential $d^1_{2, 0}$ for $m = 5$, with zeroes omitted. There are multiple column names because they were chosen to indicate only the originating cell instead of including the full character information, in order to save space. Rank is two, elementary divisors are zero and one. The small cell is in the kernel of $d^1_{2, 0}$.}
\label{d_one}
\end{table}
\begin{table}
\tiny
\makebox[\textwidth][c]{
$
\begin{array}{c|rrrrrrrrrrrrr}
& (b, a) & (b, a) & (v, v_1) & (v, v_1) & (v, v_1) & (a_3, u) & (a_3, u) & (u, b) & (u, b) & (u_1, b) & (u_1, b) & (a, v) & (a, s) \\
\\
\hline
b & -1 &   &   &   &   &   &   & 1 &   & 1 &   &   &   \\
\vline &   & -1 &   &   &   &   &   &   & 1 & 1 &   &   &   \\
\vline & -1 &   &   &   &   &   &   &   & 1 &   & 1 &   &   \\
b &   & -1 &   &   &   &   &   & 1 &   &   & 1 &   &   \\
u &   &   &   &   &   & 1 &   & -1 &   & -1 &   &   &   \\
\vline &   &   &   &   &   &   & 1 &   & -1 & -1 &   &   &   \\
\vline &   &   &   &   &   &   & 1 & -1 &   &   & -1 &   &   \\
u &   &   &   &   &   & 1 &   &   & -1 &   & -1 &   &   \\
a & 1 &   &   &   &   & -1 &   &   &   &   &   & -1 & -1 \\
a &   & 1 &   &   &   &   & -1 &   &   &   &   & -1 & -1 \\
v &   &   &   &   &   &   &   &   &   &   &   & 1 &   \\
\vline &   &   &   &   &   &   &   &   &   &   &   & 1 &   \\
v &   &   &   &   &   &   &   &   &   &   &   & 1 &   \\
\end{array}
$
}
\normalsize
\caption{The differential $d^1_{1, 0}$ for $m = 5$. Row and columns information is shortened as in Table \ref{d_one}, whence the occurrence of multiple row and column names. The elementary divisor one occurs with multiplicity $7$, and elementary divisor two occurs with multiplicity one.}
\label{d_two}
\end{table}
Let us go through all computations for $m= 5$. Details, including a picture, of a simple fundamental domain for the action of $X$ are given in \cite{Floge1983} and \cite{Rahm2011a}*{section 3.2}. We shall stick to the notations therein.\\
Recall that we set out to calculate the spectral sequence, associated to a commutative filtrated algebra, that almost comes from the skeleton filtration of a cellular complex' algebra. However, there is ``one vertex orbit missing'' in $X_\circ$. So, there are only four orbits of vertices, namely those of $a, b, u, v$, but the same numbers of edges and faces as in \cite{Rahm2011a}*{section 3.2}, namely seven orbits of edges, those of $ba, vv_1, a_3u, ub, u_1b, av, as$, and three orbits of faces. Stabilizers are given in explicit form in the same references. There is only one orbit of singular points, since the class number of $\mathbb{Q}[\sqrt{-5}]$ is two.\\
All information on the representation rings $\K^0(\Cmax \Gamma_\sigma) = R\Gamma_\sigma$ of finite stabilizers of vertices $\sigma$ is given in \cite{Rahm2010a} and used as explained in subsection~\ref{Bredon}.\\
The $E^1$-term is a complex
$$
d^1: \Z^{4 + 4 + 2 + 3} \xleftarrow{d^1_{1, 0}} \Z^{2 + 3 + 2 + 2 + 2 + 1 + 1} \xleftarrow{d^1_{2, 0}} \Z^{1+1+1}
$$
in even rows, and zero in odd rows. The $d^1$-matrices are displayed in Tables \ref{d_one} and \ref{d_two}. Their elementary divisors are easily determined (for instance by the Pari/GP software \cite{PARI2}) by calculating the Smith normal forms, yielding
$$
d^1_{1, 0} \sim \text{diag}(0, 0, 0, 0, 0, 2, 1, 1, 1, 1, 1, 1, 1)
$$
 and
$$
d^1_{2, 0} \sim \text{diag}(0, 0, 0, 0, 0, 0, 0, 0, 0, 0, 0, 1, 1).
$$
Therefore, the modified Bredon homology is 
\begin{equation}
\begin{split}
E^2_{0, 0} & \cong \Z^5 \oplus \Z/2\\
E^2_{0, 1} &\cong \Z^3\\
E^2_{0, 2} &\cong \Z,
\end{split}
\end{equation}
where we write $\Z/g := \Z/g\Z$. (A quick check of a necessary criterion is the correct rational Euler characteristic $13 - 13 +3 = 5 -3 +1$.)
Using \ref{K1} and \ref{extension}, we obtain
$$
\K^*(\Gamma\lcross \mathscr{C}_0(X_\circ)) =
\begin{cases}
\Z^6 \oplus \Z/2 , &* = 0\\
\Z^3, &* = 1.
\end{cases}
$$
Using lemma \ref{IsZero}, we have
$$
\K^*(\Gamma\lcross \mathscr{C}_0(\Hy)) =
\begin{cases}
\Z^6 \oplus \Z/2 , &* = 0\\
\Z^4, &* = 1.
\end{cases}
$$
We can now state the main theorem.
\begin{thm} \label{statement}
 The equivariant $K$-homology of the Bianchi group PSL$_2(\mathcal{O}(\rationals [\sqrt{-5}]))$ is
$$
\LHSstar \cong
\begin{cases}
\Z^6 \oplus \Z/2 , &* = 0\\
\Z^4, &* = 1.
\end{cases}
$$
\end{thm}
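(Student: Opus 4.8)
The statement is what the six-term exact sequence \ref{Hy2LHS} produces once all of its entries are known for $m=5$, so the plan has three parts: collect the inputs, substitute, and identify the connecting maps of the Borel--Serre extension.

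\emph{Inputs.} For $m=5$ the class number is $k=2$, so by \ref{tori} the two ``torus'' terms of \ref{Hy2LHS} are $\Z^{4}$ in both parities. Running the modified Atiyah--Hirzebruch spectral sequence of Lemma \ref{Bigone} on the four vertex orbits, seven edge orbits and three face orbits of $X_\circ$, with the $d^1$-matrices of Tables \ref{d_one} and \ref{d_two} and their Smith normal forms, one reads off through \ref{K1} and \ref{extension} that $\K^{*}(\Gamma\lcross\mathscr{C}_0(X_\circ))$ equals $\Z^{6}\oplus\Z/2$ for $*=0$ and $\Z^{3}$ for $*=1$; Lemma \ref{IsZero} then promotes this to $\K^{0}(\Gamma\lcross\mathscr{C}_0(\Hy))\cong\Z^{6}\oplus\Z/2$ and $\K^{1}(\Gamma\lcross\mathscr{C}_0(\Hy))\cong\Z^{4}$. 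Substituting into \ref{Hy2LHS} turns it into the cyclic exact sequence $\Z^{4}\to\LHSnaught\to\Z^{6}\oplus\Z/2\xrightarrow{\,\partial_0\,}\Z^{4}\to\LHSone\to\Z^{4}\xrightarrow{\,\partial_1\,}\Z^{4}$. The alternating sum of ranks vanishes, which already pins down $\operatorname{rk}\LHSnaught=6$ and $\operatorname{rk}\LHSone=4$ no matter what the connecting maps are; only the torsion is still open.

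\emph{The connecting maps --- the main obstacle.} I would finish by showing $\partial_1$ is an isomorphism and $\partial_0=0$. The second extension of \ref{55} becomes, after the strong Morita equivalence $\Gamma\lcross\mathscr{C}_0(\partial\Hy(P))\sim C(\Gamma\bs\partial\Hy(P))$, the K-homology long exact sequence of the compact pair $\bigl(\Gamma\bs\Hy(P),\ \bigsqcup_{i=1}^{k}T^{2}\bigr)$. A $\Gamma$-equivariant collar $\partial\Hy(P)\times[0,1)$ exists, its $C^\ast$-algebra is a cone and so has vanishing K-homology; hence the connecting map of the collar extension is an isomorphism, and by naturality $\partial_1$ factors as the restriction of a class on $\Hy$ to the open collar followed by this isomorphism. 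Thus $\partial_1$ is surjective precisely when the cusp ends $\partial\Hy(P)\times\R$ carry the whole of $\K^{1}(\Gamma\lcross\mathscr{C}_0(\Hy))$; the rank coincidence $2k=4$ is the visible trace of this, the $i$-th end contributing $\K^{1}_{\mathrm{lf}}(T^2_i\times\R)\cong\K_0(T^2_i)=\Z^2$, and one can make it rigorous either by pairing with the two traces on each cusp subgroup $\Z^2$, in the style of the trace argument already used in the proof of Lemma \ref{IsZero}, or by tracking the summand $\Z$ of Lemma \ref{IsZero} (which maps to the sum of the boundary fundamental classes) together with the image of $\Homol_1(X_\circ;M_0)$. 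A surjective endomorphism of $\Z^{4}$ is an isomorphism, so $\partial_1$ is an isomorphism. For $\partial_0$: under Lemma \ref{IsZero}, $\K^{0}(\Gamma\lcross\mathscr{C}_0(\Hy))\cong\K^{0}(\Gamma\lcross\mathscr{C}_0(X_\circ))$, and in the modified spectral sequence this group is carried by the representation rings of the finite stabilizers over the cocompact part and by point classes, none of which meets $\partial\Hy(P)$; hence $\partial_0=0$.

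\emph{Conclusion.} With $\partial_1$ an isomorphism, exactness makes $\Z^{4}\to\LHSnaught$ zero and $\LHSnaught\to\K^{0}(\Gamma\lcross\mathscr{C}_0(\Hy))$ injective; with $\partial_0=0$ that map is onto, so $\LHSnaught\cong\Z^{6}\oplus\Z/2$, and $\LHSone\cong\operatorname{coker}\partial_0=\Z^{4}$. Freeness of the torus terms $\Z^{4}$ prevents any extension problem from reintroducing torsion, so nothing further is needed. Alternatively one could substitute into the representation-ring sequence \ref{54} of the Remark rather than into \ref{Hy2LHS}, but that leaves exactly the same question about the connecting homomorphisms, which is why I regard that identification as the one genuinely non-formal step.
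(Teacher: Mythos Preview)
Your overall plan is the same as the paper's: feed the computed groups $\K^{*}(\Gamma\lcross\mathscr{C}_0(\Hy))$ into the six-term sequence \ref{Hy2LHS} and solve the resulting extension problem. The paper's own proof is a one-line reference (``the strategy \ldots\ is similar to the one used in \cite{Rahm2012}''), so you are actually writing out more than the paper does; but two points in your write-up need repair.

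First, a concrete slip: the vanishing of the alternating sum of ranks in a six-term exact sequence only gives $\operatorname{rk}\LHSnaught-\operatorname{rk}\LHSone=2$; it does \emph{not} by itself pin down the individual ranks. From the sequence one has $\operatorname{rk}\LHSnaught=10-\operatorname{rk}\partial_0-\operatorname{rk}\partial_1$, so the ranks genuinely depend on the connecting maps. This is harmless in the end since you go on to determine $\partial_0,\partial_1$, but the sentence ``no matter what the connecting maps are'' is false and should be removed.

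Second, and more substantively, your argument for $\partial_0=0$ is not a proof. Saying that the generators of $\K^{0}(\Gamma\lcross\mathscr{C}_0(X_\circ))$ ``are carried by the representation rings of the finite stabilizers over the cocompact part \ldots\ none of which meets $\partial\Hy(P)$'' is a geometric heuristic, not an identity in $\KK$; the boundary map of the extension \ref{55} is a Kasparov product and there is no general principle that classes supported away from a closed subset have zero boundary. Likewise, your collar argument for $\partial_1$ correctly reduces the question to surjectivity of the restriction $\K^{1}(\Gamma\lcross\mathscr{C}_0(\Hy))\to\K^{1}(\Gamma\lcross\mathscr{C}_0(\partial\Hy(P)\times(0,1)))$, but you then only assert this surjectivity (``the rank coincidence $2k=4$ is the visible trace of this'') rather than proving it; matching ranks does not make a map surjective. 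The paper resolves exactly this step by invoking the computation in \cite{Rahm2012} of the analogous long exact sequence in homology; if you want a self-contained argument you will need to actually produce explicit $\K^{1}$-classes on the cusp ends and check that they generate, or carry out the trace pairing you allude to in detail.
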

\begin{proof}
  It only remains to solve the 6-term extension problem \ref{Hy2LHS}. Recall that the class number $k$ is two. The strategy for solving this 6-term problem is similar to the one used in \cite{Rahm2012} where the analogous long term sequence in homology was computed.
\end{proof}
As stated in the introduction, this simultaneously computes $\K_*(\Cred\Gamma)$ and $\K_*(\Cmax\Gamma)$.\\
Analogous computations for other $m$ should present no fundamentally new difficulties. However, they are beyond the scope of the present paper.\\
Another possibly interesting question asks for the least matrix size over $\Cred\Gamma$ containing an idempotent that realizes any given K-theory class associated to a list of generators of the left hand side thus computed.
\begin{bibdiv}
  \begin{biblist}

    \bib{Anantharaman2001}{incollection}{
      author={Anantharaman, Claire},
      author={Renault, Jean},
      title={Amenable groupoids},
      date={2001},
      booktitle={Groupoids in analysis, geometry, and physics (Boulder, {CO},  1999)},
      series={Contemp. Math.},
      volume={282},
      publisher={Amer. Math. Soc.},
      address={Providence, RI},
      pages={35\ndash 46},
      url={http://dx.doi.org/10.1090/conm/282/04677},
      review2={\MR{1855241 (2002g:46110)}},
    }

    \bib{Baum1994}{incollection}{
      author={Baum, Paul},
      author={Connes, Alain},
      author={Higson, Nigel},
      title={Classifying space for proper actions and {$K$}-theory of group {$C^\ast$}-algebras},
      date={1994},
      booktitle={{$C^\ast$}-algebras: 1943--1993 ({S}an {A}ntonio, {TX}, 1993)},
      series={Contemp. Math.},
      volume={167},
      publisher={Amer. Math. Soc.},
      address={Providence, RI},
      pages={240\ndash 291},
      url={http://dx.doi.org/10.1090/conm/167/1292018},
      review2={\MR{1292018 (96c:46070)}},
    }

    \bib{Blackadar1998}{book}{
      author={Blackadar, Bruce},
      title={{$K$}-theory for operator algebras},
      edition={Second edition},
      series={Mathematical Sciences Research Institute Publications},
      publisher={Cambridge University Press},
      address={Cambridge},
      date={1998},
      volume={5},
      ISBN={0-521-63532-2},
      review2={\MR{1656031 (99g:46104)}},
    }

    \bib{Chabert2003}{article}{
      author={Chabert, J{\'e}r{\^o}me},
      author={Echterhoff, Siegfried},
      author={Nest, Ryszard},
      title={The {C}onnes-{K}asparov conjecture for almost connected groups and for linear {$p$}-adic groups},
      date={2003},
      ISSN={0073-8301},
      journal={Publ. Math. Inst. Hautes \'Etudes Sci.},
      number={97},
      pages={239\ndash 278},
      url={http://dx.doi.org/10.1007/s10240-003-0014-2},
      review2={\MR{2010742 (2004j:19004)}},
    }

    \bib{Cuntz1983}{article}{
      author={Cuntz, Joachim},
      title={{$K$}-theoretic amenability for discrete groups},
      date={1983},
      ISSN={0075-4102},
      journal={J. Reine Angew. Math.},
      volume={344},
      pages={180\ndash 195},
      url={http://dx.doi.org/10.1515/crll.1983.344.180},
      review2={\MR{716254 (86e:46064)}},
    }

    \bib{Elstrodt1998a}{book}{
      author={Elstrodt, J.},
      author={Grunewald, F.},
      author={Mennicke, J.},
      title={Groups acting on hyperbolic space},
      series={Springer Monographs in Mathematics},
      publisher={Springer-Verlag},
      address={Berlin},
      date={1998},
      ISBN={3-540-62745-6},
      note={Harmonic analysis and number theory},
      review2={\MR{1483315 (98g:11058)}},
    }

    \bib{Emerson2006}{article}{
      author={Emerson, Heath},
      author={Meyer, Ralf},
      title={Euler characteristics and {G}ysin sequences for group actions on boundaries},
      date={2006},
      ISSN={0025-5831},
      journal={Math. Ann.},
      volume={334},
      number={4},
      pages={853\ndash 904},
      url={http://dx.doi.org/10.1007/s00208-005-0747-y},
      review2={\MR{2209260 (2007b:19006)}},
    }

    \bib{Fine1989}{article}{
      author={Fine, Benjamin},
      title={Algebraic theory of the {B}ianchi groups},
      date={1989},
      volume={129},
      pages={viii+249},
      review2={\MR{1010229 (90h:20002)}},
    }

    \bib{Floge1983}{article}{
      author={Fl{\"o}ge, Dieter},
      title={Zur {S}truktur der {${\rm PSL}_{2}$} \"uber einigen imagin\"ar-quadratischen {Z}ahlringen},
      date={1983},
      ISSN={0025-5874},
      journal={Math. Z.},
      volume={183},
      number={2},
      pages={255\ndash 279},
      url={http://dx.doi.org/10.1007/BF01214824},
      review2={\MR{704107 (85c:11043)}},
    }

    \bib{Grunewald2008}{article}{
      author={Grunewald, F.},
      author={Jaikin-Zapirain, A.},
      author={Zalesskii, P.~A.},
      title={Cohomological goodness and the profinite completion of {B}ianchi groups},
      date={2008},
      ISSN={0012-7094},
      journal={Duke Math. J.},
      volume={144},
      number={1},
      pages={53\ndash 72},
      url={http://dx.doi.org/10.1215/00127094-2008-031},
      review2={\MR{2429321 (2009e:20063)}},
    }

    \bib{Julg1995}{article}{
      author={Julg, Pierre},
      author={Kasparov, Gennadi},
      title={Operator {$K$}-theory for the group {${\rm SU}(n,1)$}},
      date={1995},
      ISSN={0075-4102},
      journal={J. Reine Angew. Math.},
      volume={463},
      pages={99\ndash 152},
      review2={\MR{1332908 (96g:19006)}},
    }

    \bib{Kasparov1984}{article}{
      author={Kasparov, G.~G.},
      title={Lorentz groups: {$K$}-theory of unitary representations and
        crossed products},
      date={1984},
      ISSN={0002-3264},
      journal={Dokl. Akad. Nauk SSSR},
      volume={275},
      number={3},
      pages={541\ndash 545},
      review2={\MR{741223 (85k:22015)}},
    }

    \bib{Kasparov1988}{article}{
      author={Kasparov, G.~G.},
      title={Equivariant {$KK$}-theory and the {N}ovikov conjecture},
      date={1988},
      ISSN={0020-9910},
      journal={Invent. Math.},
      volume={91},
      number={1},
      pages={147\ndash 201},
      url={http://dx.doi.org/10.1007/BF01404917},
      review2={\MR{918241 (88j:58123)}},
    }

    \bib{Maclachlan2003}{book}{
      author={Maclachlan, Colin},
      author={Reid, Alan~W.},
      title={The arithmetic of hyperbolic 3-manifolds},
      series={Graduate Texts in Mathematics},
      publisher={Springer-Verlag},
      address={New York},
      date={2003},
      volume={219},
      ISBN={0-387-98386-4},
      review2={\MR{1937957 (2004i:57021)}},
    }

    \bib{Mislin2001}{incollection}{
      author={Mislin, Guido},
      title={On the classifying space for proper actions},
      date={2001},
      booktitle={Cohomological methods in homotopy theory ({B}ellaterra, 1998)},
      series={Progr. Math.},
      volume={196},
      publisher={Birkh\"auser},
      address={Basel},
      pages={263\ndash 269},
      review2={\MR{1851258 (2002f:55032)}},
    }

    \bib{Mislin2003}{book}{
      author={Mislin, Guido},
      author={Valette, Alain},
      title={Proper group actions and the {B}aum-{C}onnes conjecture},
      series={Advanced Courses in Mathematics. CRM Barcelona},
      publisher={Birkh\"auser Verlag},
      address={Basel},
      date={2003},
      ISBN={3-7643-0408-1},
      url={http://dx.doi.org/10.1007/978-3-0348-8089-3},
      review2={\MR{2027168 (2005d:19007)}},
    }

    \bib{Pedersen1979}{book}{
      author={Pedersen, Gert~K.},
      title={{$C^{\ast} $}-algebras and their automorphism groups},
      series={London Mathematical Society Monographs},
      publisher={Academic Press Inc. [Harcourt Brace Jovanovich Publishers]},
      address={London},
      date={1979},
      volume={14},
      ISBN={0-12-549450-5},
      review2={\MR{548006 (81e:46037)}},
    }

    \bib{Rahm2010a}{thesis}{
      author={Rahm, Alexander},
      title={(co)homologies et k-th\'eorie de groupes de bianchi par des mod\`eles g\'eom\'etriques calculatoires},
      type={Ph.D. Thesis},
      organization={Universit\'e de Montpellier},
      date={2010},
    }

    \bib{Rahm2011}{article}{
      author={Rahm, Alexander},
      title={Homology and {$K$}-theory of the {B}ianchi groups},
      date={2011},
      ISSN={1631-073X},
      journal={C. R. Math. Acad. Sci. Paris},
      volume={349},
      number={11-12},
      pages={615\ndash 619},
      url={http://dx.doi.org/10.1016/j.crma.2011.05.014},
      review2={\MR{2817377 (2012e:20116)}},
    }

    \bib{Rahm2011a}{article}{
      author={Rahm, Alexander},
      author={Fuchs, Mathias},
      title={The integral homology of {${\rm PSL}_2$} of imaginary quadratic integers with nontrivial class group},
      date={2011},
      ISSN={0022-4049},
      journal={J. Pure Appl. Algebra},
      volume={215},
      number={6},
      pages={1443\ndash 1472},
      url={http://dx.doi.org/10.1016/j.jpaa.2010.09.005},
      review2={\MR{2769243 (2012a:11063)}},
    }

\bib{Rahm2012}{article}{
      author={Rahm, Alexander},
       title={On a question of {S}erre},
        date={2012},
        ISSN={1631-073X},
     journal={C. R. Math. Acad. Sci. Paris},
      volume={350},
      number={15-16},
       pages={741\ndash 744},
         url={http://dx.doi.org/10.1016/j.crma.2012.09.001},
      review2={\MR{2981344}},
}

    \bib{Ratcliffe1994}{book}{
      author={Ratcliffe, John~G.},
      title={Foundations of hyperbolic manifolds},
      series={Graduate Texts in Mathematics},
      publisher={Springer-Verlag},
      address={New York},
      date={1994},
      volume={149},
      ISBN={0-387-94348-X},
      review2={\MR{1299730 (95j:57011)}},
    }

    \bib{Sanchez2008}{article}{
      author={S{\'a}nchez-Garc{\'{\i}}a, Rub{\'e}n},
      title={Bredon homology and equivariant {$K$}-homology of {${\rm SL}(3,{\Bbb Z})$}},
      date={2008},
      ISSN={0022-4049},
      journal={J. Pure Appl. Algebra},
      volume={212},
      number={5},
      pages={1046\ndash 1059},
      url={http://dx.doi.org/10.1016/j.jpaa.2007.07.019},
      review2={\MR{2387584 (2009b:19007)}},
    }

    \bib{Schochet1981}{article}{
      author={Schochet, Claude},
      title={Topological methods for {$C^{\ast} $}-algebras. {I}. {S}pectral sequences},
      date={1981},
      ISSN={0030-8730},
      journal={Pacific J. Math.},
      volume={96},
      number={1},
      pages={193\ndash 211},
      url={http://projecteuclid.org/getRecord?id=euclid.pjm/1102734956},
      review2={\MR{634772 (84g:46105a)}},
    }

    \bib{Serre1970}{article}{
      author={Serre, Jean-Pierre},
      title={Le probl\`eme des groupes de congruence pour {SL}2},
      date={1970},
      ISSN={0003-486X},
      journal={Ann. of Math. (2)},
      volume={92},
      pages={489\ndash 527},
      review2={\MR{0272790 (42 \#7671)}},
    }

    \bib{Siegel1965}{book}{
      author={Siegel, Carl~Ludwig},
      title={Lectures on advanced analytic number theory},
      series={Notes by S. Raghavan. Tata Institute of Fundamental Research Lectures on Mathematics, No. 23},
      publisher={Tata Institute of Fundamental Research},
      address={Bombay},
      date={1965},
      review2={\MR{0262150 (41 \#6760)}},
    }

    \bib{PARI2}{manual}{
      title={{PARI/GP, version {\tt 2.5.1}}},
      organization={{The PARI~Group}},
      address={Bordeaux},
      date={2012},
      note={available from \url{http://pari.math.u-bordeaux.fr/}},
    }

    \bib{Williams2007}{book}{
      author={Williams, Dana~P.},
      title={Crossed products of {$C{^\ast}$}-algebras},
      series={Mathematical Surveys and Monographs},
      publisher={American Mathematical Society},
      address={Providence, RI},
      date={2007},
      volume={134},
      ISBN={978-0-8218-4242-3; 0-8218-4242-0},
      review2={\MR{2288954 (2007m:46003)}},
    }

  \end{biblist}
\end{bibdiv}

\end{document}